\numberwithin{equation}{section}
\theoremstyle{plain}
\newtheorem{thm}[equation]{Theorem}
\newtheorem{lem}[equation]{Lemma}
\newtheorem{prop}[equation]{Proposition}
\newtheorem{cor}[equation]{Corollary}
\theoremstyle{definition}
\theoremstyle{remark}\newtheorem{rem}[equation]{Remark}
\theoremstyle{remark}
 \newtheorem*{rem*}{Remark}
\newtheorem*{remarks*}{Remark} 
\title[Mixed tori in contact surgery diagrams]{Mixed tori in contact surgery diagrams}
\author{Austin Christian} \email{achris66@calpoly.edu}\address{California Polytechnic State University, San Luis Obispo, CA}
\author{Tanushree Shah} \email{tanushrees@cmi.ac.in}\address{Chennai Mathematical Institute, India}
\keywords{contact surgery, symplectic filling, mixed torus} \thanks{\emph{Subjclass[2020]}: 57K33 }
\begin{document}
\begin{abstract} 
We develop a diagrammatic framework for applying the symplectic JSJ decomposition to exact/weak symplectic fillings of 3-dimensional contact manifolds.  Namely, we apply the symplectic JSJ decomposition to a contact surgery diagram for some $(Y,\zeta)$, producing a finite collection of contact manifolds, also described diagrammatically, whose exact/weak symplectic fillings determine those of $(Y,\zeta)$.  We apply this technique to recover known symplectic filling classifications for certain lens spaces and torus bundles, and also to provide an algorithm for classifying the exact/weak symplectic fillings of a large class of plumbed 3-manifolds.
\end{abstract}

\maketitle
{
\hypersetup{linkcolor={black}}
}

\section{Introduction}\label{sec:intro}

A central problem in contact and symplectic topology is to understand the 
symplectic fillings of a given contact $3$-manifold.  In some cases, progress can be made by cutting the manifold along certain convex tori.  In particular, the first author and Menke give a JSJ-type theorem showing that one can decompose symplectic fillings along \emph{mixed tori}, thereby reducing the study of fillings of a complicated contact manifold to the study of fillings of simpler contact manifolds.
\begin{thm}[{\cite[Theorem 1.1]{christian2018jsj}}]\label{thm:jsj}
Let $(Y,\zeta)$ be a closed, cooriented 3-dimensional contact manifold and let $(W,\omega)$ be an exact/weak symplectic filling of $(Y,\zeta)$.  If there exists a mixed torus $T\subset(Y,\zeta)$ admitting a standard mixed neighborhood $T^2\times[-1,1]$ with slopes $s_{-1}=-1$, $s_0=\infty$, and $s_1$, then there exists a symplectic manifold $(W',\omega')$ such that:
\begin{itemize}
    \item $(W',\omega')$ is an exact/weak symplectic filling of its boundary $(Y',\zeta')$;
    \item $(Y',\zeta')$ is the result of splitting $(Y,\zeta)$ along $T$ with some integral slope $s$ satisfying $0\leq s\leq s_1-1$;
    \item $(W,\omega)$ can be recovered from $(W',\omega')$ by symplectic round 1-handle attachment.
\end{itemize}
\end{thm}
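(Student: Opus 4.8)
The plan is to realize the splitting geometrically: working in the symplectic completion $\widehat{W}=W\cup\big([0,\infty)\times Y,\,d(e^r\alpha)\big)$, one produces a separating hypersurface foliated by a one\nb parameter family of finite\nb energy holomorphic curves whose asymptotics are forced by the mixed neighborhood, and then cuts along it. First one normalizes near $T$: fix a contact form $\alpha$ for $\zeta$, supported near $T$ and adapted to the standard mixed neighborhood $N\cong T^2\times[-1,1]$, so that the three special linear tori are convex with dividing slopes $-1$, $\infty$, $s_1$, and so that for each rational slope $s$ the pre\nb Lagrangian linear torus $T_s$ of slope $s$ carries a Morse--Bott $S^1$\nb family of closed Reeb orbits $\{\gamma_\theta\}$ in the primitive class $(1,s)\in H_1(T^2)$. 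In the weak case one works with a $2$\nb form taming $\zeta$ in a collar (after the standard collar modification) and the corresponding weak\nb filling compactness theory; the modification is confined to the collar and affects nothing below.

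\emph{The crux: the holomorphic curves.} From the standard\nb neighborhood description one first builds, in a fixed model symplectic manifold containing $N$, an $S^1$\nb family of embedded, index\nb $1$, finite\nb energy holomorphic curves $u_\theta$, each with a positive puncture asymptotic to $\gamma_\theta\subset T_s$ for one appropriate slope $s$ with $0\le s\le s_1-1$; by Wendl\nb type automatic transversality these are regular and foliate a region of the model. One then stretches the neck of $\widehat{W}$ along a pushed\nb in copy of $T_s$ and applies SFT compactness. The heart of the argument is an action/index budget: the slopes $s_{-1}=-1$, $s_0=\infty$ together with the constraint $0\le s\le s_1-1$ force every sufficiently short Reeb orbit in the relevant region to lie in one of the Morse--Bott families $\{\gamma_\theta\}$, so the limiting holomorphic building cannot break or bubble except in the expected way, and its lowest level --- the piece lying in the component of $\widehat{W}$ cut off below $T_s$ --- must contain a copy of some $u_\theta$. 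Uniqueness together with automatic transversality then propagates the entire $S^1$\nb family back into $\widehat{W}$, producing an embedded foliation of a compact region $H\subset\widehat{W}$ whose boundary is a union of two solid tori, along which $H$ meets the rest of $\widehat{W}$, together with a copy of the mixed neighborhood.

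\emph{Cutting and identification.} The foliation is precisely the normal form of a symplectic round $1$\nb handle, so $H$ is one, attached to $W':=\widehat{W}\setminus\mathrm{int}(H)$ (truncated back to a compact filling); hence $(W,\omega)$ is recovered from $(W',\omega')$ by symplectic round $1$\nb handle attachment. On boundaries, removing $H$ deletes $N$ from $Y$ and Dehn\nb fills the two new torus boundary components with meridian slope $s$, which is exactly the operation of splitting $(Y,\zeta)$ along $T$ with integral slope $s$, $0\le s\le s_1-1$. Exactness, resp.\ weakness, of $\omega'$ is inherited --- the Liouville form, resp.\ the taming $2$\nb form, restricts to $W'$, and in the weak case the only modification lived in the collar --- which gives all three bullet points.

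The main obstacle is the curve analysis. One must (i) identify the local holomorphic foliation of the mixed\nb neighborhood model together with its exact asymptotic slope; (ii) run automatic transversality both to obtain the full circle of curves and to continue it through the neck\nb stretching degeneration; and, most delicately, (iii) carry out the action bookkeeping that uses the precise hypotheses $s_{-1}=-1$, $s_0=\infty$ and $0\le s\le s_1-1$ to exclude all unwanted Reeb orbits, and hence all unwanted breaking and bubbling --- this is exactly where the ``mixed'' condition does its work, and it is also what determines which slopes $s$ can occur. A secondary, purely $3$\nb dimensional, point is to verify that the cut\nb out region $H$ really is a round $1$\nb handle and that its two attaching solid tori realize the asserted Dehn fillings, a gluing\nb combinatorics check on the attaching tori.
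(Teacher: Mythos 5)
This statement is Theorem~1.1 of Christian--Menke \cite{christian2018jsj}, cited verbatim; the present paper does not reprove it and uses it only as a black box, so there is no ``paper's own proof'' against which to compare your sketch.

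Comparing instead with the argument in \cite{christian2018jsj}: your overall picture --- a family of finite-energy holomorphic curves asymptotic to Morse--Bott Reeb orbits on a pre-Lagrangian torus in the mixed neighborhood, assembling into the cocore of a symplectic round $1$-handle --- matches the general shape of the original argument, but the sketch has a substantive logical gap at its center. You construct model curves $u_\theta$ ``in a fixed model symplectic manifold containing $N$,'' then ``stretch the neck of $\widehat{W}$'' and invoke SFT compactness to conclude that the lowest level of the limiting building contains a copy of some $u_\theta$. This inverts the logic of a neck-stretching argument: SFT compactness describes the limits of a \emph{pre-existing} sequence of curves in $\widehat{W}$; it does not export curves from a separate model manifold into $\widehat{W}$. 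The missing ingredient is an existence statement for curves in $\widehat{W}$ (or in a closed cap of the filling) to feed into the degeneration, and supplying that input is precisely where the ``mixed'' hypothesis and the sign structure of the two adjacent basic slices must enter --- not, as your sketch has it, only in the action/index bookkeeping that rules out unwanted breaking. Without the existence input there are no curves to stretch, and the role you assign to the mixed condition (taming the limit) presupposes the hard part (getting a nontrivial moduli problem to begin with). Separately, the allowed range $0\le s\le s_1-1$ is a convex-surface-theory constraint on which integral dividing-curve slopes a convex torus inside the standard mixed neighborhood can realize; it should be pinned down by that theory rather than expected to fall out of Reeb-action estimates, which by themselves do not single out this interval.
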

So far, \Cref{thm:jsj} has typically been applied by choosing a smooth manifold $Y$ whose fillable contact structures are classified (or at least reasonably well understood) via the basic slice techniques of Honda (\cite{honda2000classification,honda2000classification2}) and then using this basic slice description of $(Y,\zeta)$ to identify a mixed torus $T\subset(Y,\zeta)$ as well as the contact manifolds which result from splitting along $T$.  See, for instance, \cite{fossati2020topological,christian2021symplectic,etnyre2021symplectic,christian2023some,etnyre2024symplectic}.

In this note we develop a diagrammatic framework for applying \Cref{thm:jsj}, allowing this tool to be applied to contact manifolds which result from Legendrian surgery, but for which a basic slice description may not be readily available.  Namely, our main result will apply \Cref{thm:jsj} to a contact manifold $(Y,\zeta)$ which is obtained by Legendrian surgery on some other contact manifold $(M,\xi)$ along a Legendrian link $L\subset (M,\xi)$ and then describe the ``split open" contact manifolds produced by \Cref{thm:jsj} as other surgeries on $(M,\xi)$.

As an application, we recover the previously known classifications of symplectic fillings for virtually overtwisted lens spaces (\cite{etnyre2021symplectic,christian2023some}) and torus bundles (\cite{christian2021symplectic}), now seen through our diagrammatic approach.  In addition, our diagrammatic approach allows us to generalize those classifications, providing an algorithm which may be applied to the symplectic fillings of any \emph{good plumbed $3$-manifold} whose plumbing graph is sufficiently tame.

Let us now describe the circumstances under which we will identify a mixed torus.  Consider $(M,\xi)$, obtained via surgery along a 0-sphere from a closed, cooriented (but not necessarily connected) 3-dimensional contact manifold $(\tilde{M},\tilde{\xi})$, and denote by $S\subset (M,\xi)$ the 2-dimensional belt sphere of this surgery.  Now suppose that $L\subset(M,\xi)$ is an \emph{inconsistent chain}, meaning that $L$ is either a Legendrian knot which has been stabilized both positively and negatively or has the form
\[
L = \Lambda_1\sqcup \Lambda_2\sqcup\cdots\sqcup \Lambda_{n-1}\sqcup \Lambda_{n},
\]
$n\geq 2$, where the components $\Lambda_1$ and $\Lambda_n$ may coincide, and
\begin{enumerate}
    \item for $k=1,n$, $\Lambda_k$ is obtained from some other Legendrian knot by stabilizing once with sign $\sigma_k$;

    \item for $2\leq k\leq n-1$, $\Lambda_k$ is a Legendrian meridian of $\Lambda_{k+1}$, and $\Lambda_2$ is also a Legendrian meridian of $\Lambda_1$; namely, these components are max-tb unknots;
    
    \item the unknots $\Lambda_2,\ldots,\Lambda_{n-1}$ are otherwise unlinked\footnote{Note, however, that $\Lambda_1$ and $\Lambda_n$ may be linked or even coincident.} from $L$;
    
    \item with $\sigma_1=\pm 1$ and $\sigma_{n}=\pm 1$ as above,
    \[
    \sigma_1\cdot\ell k(\Lambda_1,\Lambda_2)\cdot\ell k(\Lambda_2,\Lambda_3)\cdots \ell k(\Lambda_{n-1},\Lambda_{n})\cdot \sigma_{n} = -1.
    \]
\end{enumerate}
Finally, we assume that $L$ intersects $S$ exactly twice, with these intersections occurring along a single component of $L$ and canceling algebraically.  See \Cref{fig:main-thm-setup}.

\begin{figure}
    \centering
    \includegraphics[width=\linewidth]{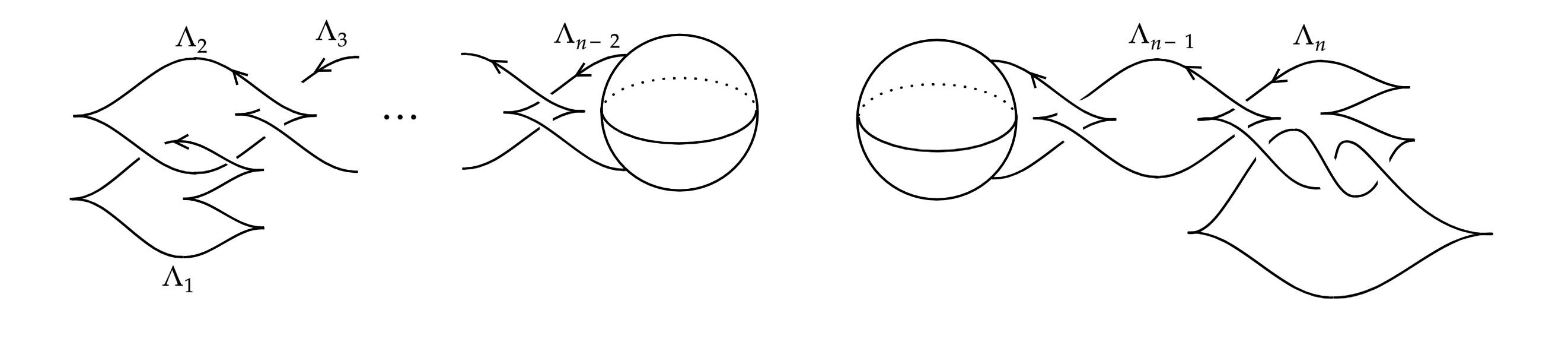}
    \caption{The link $L=\Lambda_1\sqcup\cdots\sqcup\Lambda_n$ is an inconsistent chain which passes over the belt sphere $S$ of a symplectic 1-handle.  The chain is inconsistent because, while both $\Lambda_1$ and $\Lambda_n$ are negatively-stabilized, the product of the linking numbers $\ell k(\Lambda_1,\Lambda_2),\ldots,\ell k(\Lambda_{n-1},\Lambda_n)$ is $-1$.}
    \label{fig:main-thm-setup}
\end{figure}
\begin{rem}
The definition of inconsistent subchain allows for the possibility that the knots $\Lambda_1$ and $\Lambda_n$ coincide.  If $n\geq 3$, so that $L$ is a multi-component link, this simply forces the identity
\[
\ell k(\Lambda_1,\Lambda_2)\cdot\ell k(\Lambda_2,\Lambda_3)\cdots \ell k(\Lambda_{n-1},\Lambda_{n}) = -1.
\]
But if $\Lambda_1=\Lambda_n$ and $n=2$, then we must make sense of the self-linking number $\ell k(\Lambda_1,\Lambda_2)$.  Because $\Lambda_1$ intersects the surgery sphere $S\subset(M,\xi)$ in exactly two algebraically canceling points, there is a naturally-defined link $\Lambda_1^+\sqcup\Lambda_1^-$, disjoint from $S$, with the property that $\Lambda_1^+\#\Lambda_1^-$ is Legendrian isotopic to $\Lambda_1$; see \Cref{fig:pinching} and \Cref{subsec:background:spheres}.  We then interpret $\ell k(\Lambda_1,\Lambda_1)$ to mean $\ell k(\Lambda_1^+,\Lambda_1^-)$.
\end{rem}

\begin{figure}
    \centering
    \includegraphics[width=\linewidth]{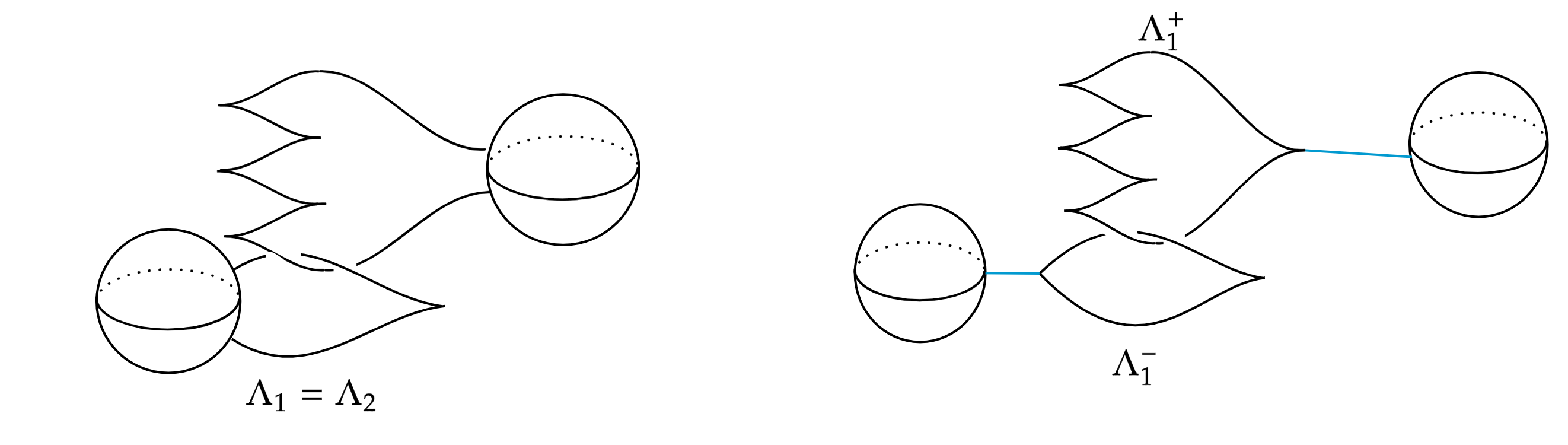}
    \caption{In \Cref{subsec:background:spheres} we describe a canonical splitting of the component $\Lambda_k$ of $L$ which intersects $S$ into a link $\Lambda_k^+\sqcup\Lambda_k^-$.  We use this splitting to define a self-linking number in the case where $\Lambda_1$ and $\Lambda_2$ coincide.}
    \label{fig:pinching}
\end{figure}

Our main theorem will apply \cref{thm:jsj} to a mixed torus $T$ identified in $(Y,\zeta)$, the contact manifold constructed from $(M,\xi)$ via Legendrian surgery along $L$.  The result is a list of $n$ contact manifolds obtained by splitting $(Y,\zeta)$ along $T$, and each of these may be expressed as a Legendrian surgery on $(\tilde{M},\tilde{\xi})$ along a link which we now describe.  First, we may isotope $L\subset (M,\xi)$ so that $\Lambda_k$ is its unique component intersecting $S$, for some $1\leq k\leq n$.  As referenced above, we will define in \Cref{subsec:background:spheres} a link $\Lambda_k^+\sqcup\Lambda_k^-$ which is disjoint from $S$ and for which $\Lambda_k^+\#\Lambda_k^-$ is Legendrian isotopic, relative to $L\setminus\Lambda_k$, to $\Lambda_k$.  We may then interpret $L\setminus\Lambda_k$ as a (possibly empty) link $L_k$ in $(\tilde{M},\tilde{\xi})$ and define $(M_k,\xi_k)$, for $1\leq k\leq n$, to be the result of Legendrian surgery on $(\tilde{M},\tilde{\xi})$ along $L_k$.  Abusing notation, we also have a two-component link $\Lambda_k^+\sqcup\Lambda_k^-$ in $(M_k,\xi_k)$.  At last, we can state our main theorem.

\begin{thm}\label{thm:fillings-of-round-surgery}
Every exact/weak symplectic filling of the contact manifold $(Y,\zeta)$ described above is obtained by attaching a symplectic round 1-handle to an exact/weak symplectic filling of $(M_k,\xi_k)$ along $\Lambda_k^+\sqcup\Lambda_k^-$, for some $1\leq k\leq n$.
\end{thm}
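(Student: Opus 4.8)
The plan is to reduce \Cref{thm:fillings-of-round-surgery} to \Cref{thm:jsj} by exhibiting, inside $(Y,\zeta)$, a concrete mixed torus $T$ whose standard mixed neighborhood has the slope data $s_{-1}=-1$, $s_0=\infty$, $s_1$ required by that theorem, and then identifying the split-open contact manifolds explicitly as the $(M_k,\xi_k)$ together with the round-handle data $\Lambda_k^+\sqcup\Lambda_k^-$. First I would use the structure of an inconsistent chain: the combinatorial condition $\sigma_1\cdot\ell k(\Lambda_1,\Lambda_2)\cdots\ell k(\Lambda_{n-1},\Lambda_n)\cdot\sigma_n=-1$ together with the two algebraically-canceling intersections with $S$ is exactly what is needed to produce a convex torus bounding a neighborhood of the surgered chain that contains dividing curves of mixed character — one side comes from a positive basic slice (forced by the $+$-stabilization contribution) and the other from a negative basic slice. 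Concretely, I would build $T$ as the boundary of a standard neighborhood of the "pinched" curve obtained by splitting $\Lambda_k$ along $S$ and then connect-summing across the 1-handle; the belt sphere $S$ and the surgery framings on the $\Lambda_i$ dictate the three slopes $s_{-1},s_0,s_1$ after a suitable choice of coordinates on $T^2\times[-1,1]$.

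The second block of work is the identification of the split manifolds. \Cref{thm:jsj} tells us $(Y',\zeta')$ is the result of splitting $(Y,\zeta)$ along $T$ with some integral slope $s$ with $0\le s\le s_1-1$, and that $(W,\omega)$ is recovered by a symplectic round 1-handle attachment. I would show that, for each admissible integer slope $s$ in that range, the split-open manifold is contactomorphic to $(M_k,\xi_k)$ for the corresponding index $k$ — i.e., the range of allowed slopes is in bijection with the chain components $\Lambda_1,\ldots,\Lambda_n$. This is the heart of the diagrammatic translation: cutting along $T$ at slope $s$ amounts, in surgery language, to "opening" the chain at the $k$-th link, which replaces the surgery picture on $(M,\xi)$ along $L$ by a surgery picture on $(\tilde M,\tilde\xi)$ along $L_k=L\setminus\Lambda_k$, with the two free ends $\Lambda_k^\pm$ now becoming the attaching spheres of the round 1-handle. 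I would make this precise by tracking what happens to each surgery curve under the cut — the components $\Lambda_j$ for $j\ne k$ are unaffected and descend to $(\tilde M,\tilde\xi)$, while $\Lambda_k$, which met $S$, is precisely the component that gets split — and by comparing the resulting contact surgery diagram to the definition of $(M_k,\xi_k)$ given just before the theorem statement. The round-handle recovery statement of \Cref{thm:jsj} then matches, on the nose, attachment along $\Lambda_k^+\sqcup\Lambda_k^-$.

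I expect the main obstacle to be the careful bookkeeping that shows the mixed neighborhood of $T$ really has the normalized slopes $s_{-1}=-1$, $s_0=\infty$, $s_1$ demanded by \Cref{thm:jsj}, and that the resulting integer slopes $0\le s\le s_1-1$ are in honest bijection with $\{1,\dots,n\}$ via the construction of $(M_k,\xi_k)$. This requires choosing an explicit framing/coordinate system on $T^2\times[-1,1]$ adapted to the belt sphere $S$ and to the contact framings of the chain, verifying convexity and the mixed signature of the dividing set (one positive and one negative basic slice, which is where condition (4) and the cancelling intersections with $S$ get used), and then checking that "splitting at slope $s$" has the claimed diagrammatic effect — in particular that the self-linking interpretation of $\ell k(\Lambda_1,\Lambda_1)$ when $n=2$ and $\Lambda_1=\Lambda_n$ is consistent with the cut. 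A secondary technical point is making sure the two intersection points of $\Lambda_k$ with $S$ being algebraically cancelling is exactly what lets $\Lambda_k^+\#\Lambda_k^-$ be Legendrian isotopic, rel $L\setminus\Lambda_k$, to $\Lambda_k$, so that the round-handle attaching data is well-defined; this is essentially the content promised in \Cref{subsec:background:spheres}, so I would cite that and focus the proof on the torus identification and the slope count.
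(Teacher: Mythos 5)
Your proposal follows the same two-step strategy as the paper: exhibit a mixed torus $T$ in $(Y,\zeta)$ with normalized slopes $s_{-1}=-1$, $s_0=\infty$, $s_1=n$ so that \Cref{thm:jsj} applies, then identify the $n$ split-open contact manifolds with $(M_1,\xi_1),\ldots,(M_n,\xi_n)$. These are precisely \Cref{prop:mixed-belt-torus} and \Cref{prop:splitting-along-belt-torus}, and you correctly flag the slope computation and the slope-to-index bijection as the real work.

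One point needs sharpening: you describe $T$ as ``the boundary of a standard neighborhood of the `pinched' curve obtained by splitting $\Lambda_k$ along $S$ and then connect-summing across the 1-handle,'' which reads as $T=\partial N(\Lambda_k)$. That torus bounds the Legendrian-surgery solid torus on one side, so it is not sandwiched between two basic slices and is not the mixed torus. The paper's $T$ is instead the \emph{gluing torus of the round surgery}: after Legendrian surgery along $\tilde L=L\setminus\Lambda_1$ in $(\tilde M,\tilde\xi)$, one removes $N(\Lambda_1^+)$ and $N(\Lambda_1^-)$ and identifies their boundaries via the map $\gamma$ of \Cref{eq:gluing-map}; $T$ is the common image. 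That torus is flanked by the basic slices $N(\tilde\Lambda_1^+)\setminus N(\Lambda_1^+)$ and $N(\tilde\Lambda_n)\setminus N(\Lambda_1^-)$ (where $\tilde\Lambda_1^+,\tilde\Lambda_n$ are once-destabilized), and their opposite signs come from the inconsistency condition. The slope $s_1=n$ is then extracted by tracking the meridian of $N(\Lambda_1^-)$ through the chain of Legendrian surgeries along $\Lambda_2,\ldots,\Lambda_n$ via \cite[Proposition~2]{ding2009handle} and applying a coordinate change --- this is not automatic from the framing of $\Lambda_k$ alone, so your phrase ``the belt sphere $S$ and the surgery framings dictate the three slopes'' glosses the step where the chain length $n$ actually enters. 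With the torus identified correctly, your outline of \Cref{prop:splitting-along-belt-torus} (slope $s$ corresponds to breaking the chain at $\Lambda_{s+1}$, with $\Lambda_{s+1}^\pm$ becoming the round 1-handle attaching data) is exactly the paper's argument.
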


Given the clunkiness of its setup, \cref{thm:fillings-of-round-surgery} would seem to apply only in a very narrow circumstance.  To demonstrate that \cref{thm:fillings-of-round-surgery} is a generally useful tool, we will first use it to recover some known classifications of symplectic fillings in \cref{subsec:intro:known-results}.  A key observation here is that any contact manifold $(M,\xi)$ may be written as
\[
(M,\xi) \cong (M\#S^3,\xi\#\xi_{\mathrm{std}}),
\]
and thus the assumption that $(M,\xi)$ is obtained via surgery along a 0-sphere is not at all restrictive.  After recovering these known classifications, in \cref{subsec:intro:plumbed} we apply \cref{thm:fillings-of-round-surgery} to a class of contact manifolds whose fillings were previously inaccessible.

\subsection{Recovering some known filling classifications}\label{subsec:intro:known-results}

The first application of \cref{thm:jsj} (here and in \cite{christian2018jsj}) says that Legendrian surgery along a knot which has been stabilized both positively and negatively has no effect on the count of exact or weak symplectic fillings.

\begin{thm}[{\cite[Theorem 1.4]{christian2018jsj}}]\label{thm:mixed-stabilizations}
Suppose $(Y,\zeta)$ is obtained from $(M,\xi)$ via Legendrian surgery along a Legendrian knot $S_+(S_-(\Lambda))$ which has been stabilized both positively and negatively.  Then every exact/weak symplectic filling of $(Y,\zeta)$ is obtained from an exact/weak symplectic filling of $(M,\xi)$ by attaching a Weinstein 2-handle along $S_+(S_-(\Lambda))$.
\end{thm}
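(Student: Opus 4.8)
The plan is to realize Theorem~\ref{thm:mixed-stabilizations} as a special case of Theorem~\ref{thm:fillings-of-round-surgery}, by exhibiting the knot $\Lambda' = S_+(S_-(\Lambda))$ as an inconsistent chain that intersects a belt sphere appropriately. The first step is to introduce the 0-sphere: write $(M,\xi) \cong (M\#S^3, \xi\#\xiStd)$, and regard this connected sum as the result of a symplectic 1-handle (round 0-handle) attachment, so that $M$ carries a belt sphere $S$ of a trivial $S^3$-summand. Concretely, choose the connected-sum sphere $S$ to be a small 2-sphere that intersects $\Lambda'$ in exactly two points; this is possible because we may isotope $\Lambda'$ to pass once through the neck of the connected sum and back, and these two intersection points cancel algebraically since $\Lambda'$ re-enters the $S^3$-summand and leaves it. Then $(M,\xi)$ with the knot $\Lambda'$ passing over $S$ is exactly the setup preceding Theorem~\ref{thm:fillings-of-round-surgery}, with $\tilde{M} = M \sqcup S^3$ (or just $M$, after forgetting the $S^3$ piece) and $(Y,\zeta)$ the Legendrian surgery along $\Lambda'$.

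The second step is to check that $\Lambda' = S_+(S_-(\Lambda))$ qualifies as an inconsistent chain with $n=1$: a single component that has been stabilized both positively and negatively. Condition~(1) in the definition of inconsistent chain (for $k=1$, $k=n$ with $n=1$ identified) is satisfied with $\sigma_1 = +1$ and $\sigma_n = -1$, or the reverse — one needs to confirm that the "stabilized once with sign $\sigma_k$" language for the two endpoints is met by a knot carrying one stabilization of each sign, interpreting the $n=1$ case as $\Lambda_1 = \Lambda_n$ being a single knot that has absorbed both a $\sigma_1$-stabilization and a $\sigma_n$-stabilization. Conditions (2) and (3) are vacuous when $n=1$. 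For condition~(4): with $n=1$, there are no intermediate linking numbers, so the product degenerates to $\sigma_1\cdot\sigma_n = (+1)(-1) = -1$, as required. (The self-linking subtlety in the remark — defining $\ell k(\Lambda_1,\Lambda_1)$ via the splitting $\Lambda_1^+ \sqcup \Lambda_1^-$ — is what makes the $n=1$ bookkeeping work; I should verify that the canonical splitting coming from the two intersection points with $S$ indeed recovers $\Lambda' = \Lambda_1^+ \# \Lambda_1^-$ and that the linking number of the two pieces computes to $-1$ in light of how the positive and negative stabilizations are distributed on either side of $S$.)

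The third step is to unwind the conclusion of Theorem~\ref{thm:fillings-of-round-surgery} in this case. With $k=1$ the only index, we get $L_1 = L\setminus\Lambda_1 = \emptyset$, so $(M_1,\xi_1)$ is just $(\tilde{M},\tilde{\xi}) = (M,\xi)$ (after discarding the disjoint $S^3$), and the two-component link $\Lambda_1^+\sqcup\Lambda_1^-$ lives in $(M,\xi)$. Theorem~\ref{thm:fillings-of-round-surgery} then says every exact/weak filling of $(Y,\zeta)$ is a symplectic round 1-handle attachment to a filling of $(M,\xi)$ along $\Lambda_1^+\sqcup\Lambda_1^-$. The final move is to recognize that attaching a symplectic round 1-handle along a link of the form $\Lambda_1^+\sqcup\Lambda_1^-$, where $\Lambda_1^+\#\Lambda_1^-$ is Legendrian isotopic to $\Lambda'$, is the same operation as attaching a single Weinstein 2-handle along $\Lambda' = S_+(S_-(\Lambda))$ — this is a standard handle-calculus fact (a round 1-handle along a connected-sum-split link equals a 2-handle along the connect sum, combined with the reverse of the 1-handle that created $S$), and it is essentially the content of the correspondence developed in \Cref{subsec:background:spheres}.

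The main obstacle I expect is the second and third steps: carefully matching the $n=1$ degenerate case of the inconsistent-chain definition with a knot stabilized both ways, and in particular pinning down the canonical splitting $\Lambda'^+ \sqcup \Lambda'^-$ so that its self-linking number is genuinely $-1$ and so that the round-1-handle-along-split-link operation provably coincides with the Weinstein-2-handle-along-$\Lambda'$ operation. Everything else — introducing the $S^3$-summand, making $S$ meet $\Lambda'$ in two canceling points — is routine, but the identification of the two handle attachments is where the real content of reducing Theorem~\ref{thm:mixed-stabilizations} to Theorem~\ref{thm:fillings-of-round-surgery} lives.
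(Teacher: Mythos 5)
Your overall strategy matches the paper's: write $(M,\xi) \cong (M\#S^3,\xi\#\xi_{\mathrm{std}})$, recognize $S_+(S_-(\Lambda))$ as an inconsistent chain with $n=1$ meeting the belt sphere twice, apply \Cref{thm:fillings-of-round-surgery}, and then convert the round 1-handle attachment into a Weinstein 2-handle attachment. However, your third step contains a real gap: you cannot simply ``discard the disjoint $S^3$.'' The split manifold is $(\tilde{M},\tilde{\xi}) = (M,\xi)\sqcup(S^3,\xi_{\mathrm{std}})$, and the two components $\Lambda_1^+$ and $\Lambda_1^-$ of the split link do \emph{not} both live in $(M,\xi)$ --- one lives in each summand. (Concretely, the paper arranges for the split link to be $S_+(\Lambda)\sqcup S_-(\Lambda_0)$, with the positively stabilized knot in $(M,\xi)$ and a negatively stabilized unknot $\Lambda_0$ in $(S^3,\xi_{\mathrm{std}})$.) So \Cref{thm:fillings-of-round-surgery} only gives you fillings of $(Y,\zeta)$ from fillings of the disjoint union $(M,\xi)\sqcup(S^3,\xi_{\mathrm{std}})$, not from fillings of $(M,\xi)$ alone.

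The missing idea is the symplectic non-co-fillability of $(S^3,\xi_{\mathrm{std}})$ (Gromov, Eliashberg): any exact/weak filling of $(M,\xi)\sqcup(S^3,\xi_{\mathrm{std}})$ must have the form $(W,\omega)\sqcup(B^4,\omega_{\mathrm{std}})$, with $(W,\omega)$ a filling of $(M,\xi)$. Only with this in hand can the round 1-handle attachment be analyzed correctly: the Weinstein 1-handle part of the round 1-handle connects the two disjoint components into the boundary connected sum $(W,\omega)\natural(B^4,\omega_{\mathrm{std}})$, which is just $(W,\omega)$, and the Weinstein 2-handle part is then attached along $S_+(\Lambda)\#S_-(\Lambda_0)=S_+(S_-(\Lambda))$. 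Your description of this last step --- ``a round 1-handle along a connected-sum-split link equals a 2-handle along the connect sum, combined with the reverse of the 1-handle that created $S$'' --- is also off: nothing is being reversed. The 1-handle inside the round 1-handle is a new handle joining the filling of $M$ to $B^4$, and the boundary connect sum with $B^4$ is a no-op. You flagged uncertainty about exactly this step; the non-co-fillability argument and the resulting disjoint-union decomposition of fillings is the specific content you were missing.
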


An immediate consequence is the following:

\begin{cor}[{\cite[Corollary 1.5]{christian2018jsj}}]\label{cor:mixed-stabilizations-unique-fillings}
If $(Y,\zeta)$ is obtained from $(S^3,\xi_{\mathrm{std}})$ via Legendrian surgery along a Legendrian knot $S_+(S_-(\Lambda))\subset(S^3,\xi_{\mathrm{std}})$, then $(Y,\zeta)$ admits a unique exact/weak filling.
\end{cor}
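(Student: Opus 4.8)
The plan is to combine \cref{thm:mixed-stabilizations} with the classical uniqueness of symplectic fillings of the standard contact $3$-sphere. Taking $(M,\xi)=(S^3,\xiStd)$ in \cref{thm:mixed-stabilizations}, we learn that every exact/weak symplectic filling of $(Y,\zeta)$ is obtained from an exact/weak symplectic filling of $(S^3,\xiStd)$ by attaching a Weinstein $2$-handle along the \emph{fixed} Legendrian knot $S_+(S_-(\Lambda))$. So the statement reduces to two facts: first, that $(S^3,\xiStd)$ has essentially a unique filling; and second, that the Weinstein $2$-handle attachment is canonical once the Legendrian attaching data and the filling it is glued to are specified.

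For the first fact in the exact setting I would cite the theorem of Gromov, with refinements of Eliashberg and McDuff: every exact (indeed every minimal strong) symplectic filling of $(S^3,\xiStd)$ is, up to symplectic deformation, the standard ball $(B^4,\omega_{\mathrm{std}})$. For the second fact, the symplectic $2$-handle attachment of \cref{thm:mixed-stabilizations} is determined up to symplectomorphism by the Legendrian isotopy class of $S_+(S_-(\Lambda))$ together with the deformation class of the domain to which the handle is attached. Composing these, every exact symplectic filling of $(Y,\zeta)$ is symplectic-deformation equivalent to the single Weinstein domain obtained by attaching a $2$-handle to $(B^4,\omega_{\mathrm{std}})$ along $S_+(S_-(\Lambda))$.

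In the weak setting one runs the same argument, recalling that any weak symplectic filling of $(S^3,\xiStd)$ is, after deformation, a blow-up of $(B^4,\omega_{\mathrm{std}})$ (Gromov--Eliashberg). Since the exceptional spheres may be taken disjoint from a collar of the boundary, hence from the region where the Weinstein $2$-handle is glued, the weak fillings of $(Y,\zeta)$ are precisely the blow-ups of that same Weinstein domain; interpreting ``unique'' up to blow-up and deformation, as is standard for weak fillings, we are done.

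There is no genuine obstacle beyond bookkeeping the equivalence relation under which ``unique'' is asserted --- symplectic deformation equivalence, and additionally up to blow-up in the weak case --- and checking that the handle attachment supplied by \cref{thm:mixed-stabilizations} really is canonical given the Legendrian and the chosen filling of $(S^3,\xiStd)$, which it is.
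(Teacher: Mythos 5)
Your argument is exactly the one the paper intends: the corollary is stated as an immediate consequence of \cref{thm:mixed-stabilizations} applied with $(M,\xi)=(S^3,\xi_{\mathrm{std}})$, combined with the Gromov--Eliashberg--McDuff uniqueness of exact/weak fillings of $(S^3,\xi_{\mathrm{std}})$, which is precisely what you spell out. Your attention to the equivalence under which ``unique'' is interpreted (deformation, plus blow-up in the weak case) and to the canonicity of the Weinstein $2$-handle attachment is a correct and welcome bit of bookkeeping, but does not represent a different route.
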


The proof of \cref{thm:mixed-stabilizations} in \cite{christian2018jsj} uses the correspondence between stabilizations and basic slices to recognize $\partial N(S_-(\Lambda))\subset(M,\xi)$ as a torus which becomes mixed after Legendrian surgery along $S_+(S_-(\Lambda))\subset N(S_-(\Lambda))$ and then argues that splitting along this torus returns us to the contact manifold $(M,\xi)$.  Here we present an argument which uses \cref{thm:fillings-of-round-surgery} to identify the mixed torus as the ``surgery torus" of a round 1-handle attachment.
\begin{figure}
    \centering
    \includegraphics[width=\linewidth]{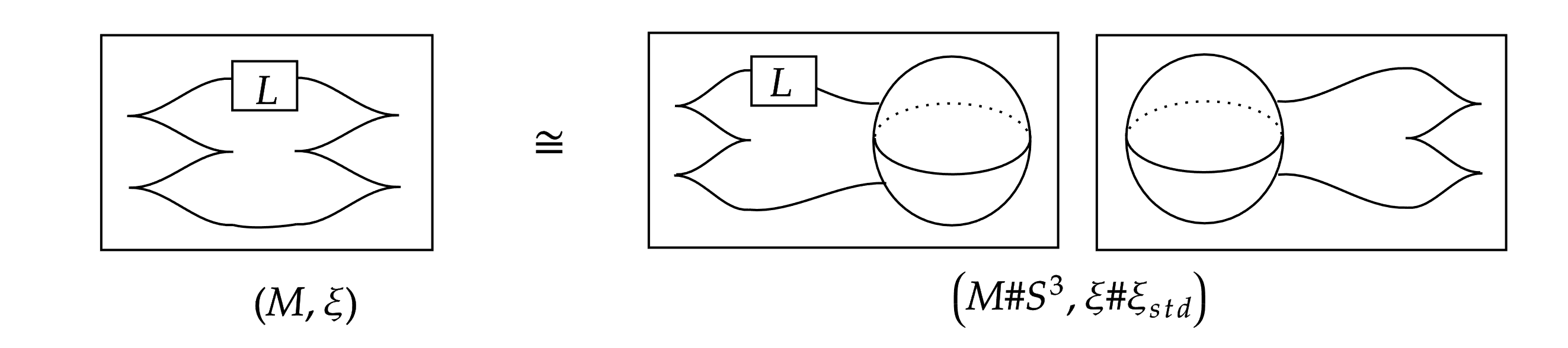}
    \caption{By writing $(M,\xi)$ as the connected sum $(M\# S^3,\xi\#\xi_{\mathrm{std}})$, any Legendrian knot $S_+(S_-(L))$ which has been stabilized both positively and negatively can be realized as an inconsistent chain passing over a surgery sphere.}
    \label{fig:mixed-stabilizations}
\end{figure}

\begin{proof}[Proof of \Cref{thm:mixed-stabilizations}]
As mentioned above, we may realize any contact manifold $(M,\xi)$ as the connected sum $(M\#S^3,\xi\#\xi_{\mathrm{std}})$.  Diagrammatically, this can be carried out by drawing a contact rational Dehn surgery diagram for $(M,\xi)$ next to an empty diagram and connecting the two via a Weinstein 1-handle.  In $(M,\xi)$ we have the Legendrian knot $S_+(S_-(\Lambda))$, and we may push an arc which contains a negative stabilization through the convex sphere which splits the connected sum --- that is, across the 1-handle.  See \Cref{fig:mixed-stabilizations}.  Now by splitting open the connected sum, we obtain the link
\[
S_+(\Lambda) \sqcup S_-(\Lambda_0) \subset (M,\xi) \sqcup (S^3,\xi_{\mathrm{std}}),
\]
where $\Lambda_0$ is the standard Legendrian unknot, as in the splitting depicted in \Cref{fig:pinching}.  \cref{thm:fillings-of-round-surgery} then tells us that every exact/weak symplectic filling of $(Y,\zeta)$ results from attaching a symplectic round 1-handle to an exact/weak symplectic filling of $(M,\xi) \sqcup (S^3,\xi_{\mathrm{std}})$ along $S_+(\Lambda)\sqcup S_-(\Lambda_0)$.

Now $(S^3,\xi_{\mathrm{std}})$ is well-known not to be symplectically co-fillable (c.f. \cite{gromov1985pseudo,Eli01}), so in fact any exact/weak filling of $(M,\xi)\sqcup(S^3,\xi_{\mathrm{std}})$ has the form
\[
(W,\omega)\sqcup(B^4,\omega_{\mathrm{std}}),
\]
where $(W,\omega)$ is an exact/weak filling of $(M,\xi)$.  We attach a symplectic round 1-handle to such a filling along $S_+(L)\sqcup S_-(\Lambda_0)$ by first attaching a Weinstein 1-handle which connects the components $(W,\omega)$ and $(B^4,\omega_{\mathrm{std}})$ into the boundary connected sum
\[
(W,\omega) \natural (B^4,\omega_{\mathrm{std}}) = (W,\omega)
\]
and then attaching a Weinstein 2-handle along
\[
S_+(\Lambda)\# S_-(\Lambda_0) = S_+(S_-(\Lambda)).
\]
This gives the desired description of exact/weak symplectic fillings of $(Y,\zeta)$.
\end{proof}

While \cref{cor:mixed-stabilizations-unique-fillings} gives a uniqueness statement for exact/weak symplectic fillings of certain contact manifolds, note that \cref{thm:mixed-stabilizations} instead gives a \emph{reducing} statement: to classify the exact/weak symplectic fillings of $(Y,\zeta)$, one must first classify the exact/weak symplectic fillings of each $(M_k,\xi_k)$.  Other such statements have since been proven, typically reducing the classification of fillings for virtually overtwisted contact structures to the same problem for universally tight structures.

For instance:
\begin{thm}[{\cite{etnyre2021symplectic,christian2023some}}]\label{thm:lens-space-fillings}
The classification of exact/weak symplectic fillings of virtually overtwisted lens spaces reduces to the classification of exact/weak symplectic fillings of universally tight lens spaces.
\end{thm}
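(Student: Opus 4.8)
The plan is to run an induction on the ``sign changes'' appearing in Honda's classification of tight contact structures on lens spaces, using \cref{thm:fillings-of-round-surgery} to peel off one sign change at a time. First recall the setup: by \cite{honda2000classification}, a tight contact structure $\zeta$ on $L(p,q)$ is encoded by a continued fraction $-p/q=[a_1,\ldots,a_m]$ with each $a_i\leq -2$ together with a choice of signs (a ``shuffling'' of basic slices), and $(L(p,q),\zeta)$ is realized as Legendrian surgery on $(S^3,\xiStd)$ along a chain of Legendrian unknots $\Lambda_1\sqcup\cdots\sqcup\Lambda_m$, with $\Lambda_i$ a meridian of $\Lambda_{i+1}$, $\tb(\Lambda_i)=a_i+1$, and rotation numbers recording the stabilization signs. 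There are exactly two universally tight structures on $L(p,q)$ --- the ``all positive'' and ``all negative'' shufflings --- and $\zeta$ is virtually overtwisted precisely when its sign choice cannot be shuffled to one of these, i.e. when it contains a genuine sign change, necessarily occurring across a string of $-2$'s in the continued fraction.

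Next I would extract an inconsistent chain. If $\zeta$ is virtually overtwisted, then after performing Honda's shuffling moves diagrammatically (Legendrian isotopies in $(S^3,\xiStd)$) I may assume there are indices $a\leq b$ such that $\Lambda_a$ carries a single positive stabilization, $\Lambda_b$ a single negative stabilization, the intermediate $\Lambda_{a+1},\ldots,\Lambda_{b-1}$ are max-$\tb$ unknots (the $-2$-string), and the linking numbers along the subchain multiply with $\sigma_a\sigma_b$ to $-1$; this is exactly an inconsistent chain in the sense of our setup, with the degenerate case $a=b$ reducing to the $S_+(S_-(\Lambda))$ situation of \cref{thm:mixed-stabilizations}. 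Then, exactly as in the proof of \cref{thm:mixed-stabilizations}, I would rewrite $(S^3,\xiStd)\cong(S^3\#S^3,\xiStd\#\xiStd)$ and push an appropriate arc of one component $\Lambda_k$ ($a\leq k\leq b$) of this inconsistent chain across the belt sphere $S$, so that $L$ meets $S$ exactly twice --- both points on $\Lambda_k$, cancelling algebraically --- and all hypotheses of \cref{thm:fillings-of-round-surgery} hold.

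Now apply \cref{thm:fillings-of-round-surgery}: every exact/weak symplectic filling of $(L(p,q),\zeta)$ is obtained by attaching a symplectic round $1$-handle to a filling of some $(M_k,\xi_k)$, where $(M_k,\xi_k)$ is Legendrian surgery on $(S^3,\xiStd)\sqcup(S^3,\xiStd)$ along the chain $L\setminus\Lambda_k$ with $\Lambda_k^+\sqcup\Lambda_k^-$ tracked on the two summands. Deleting the crossing component $\Lambda_k$ from the full chain leaves (sub)chains of Legendrian unknots, and attaching the meridians $\Lambda_k^\pm$ to their ends keeps them chains, so $(M_k,\xi_k)$ is a disjoint union of (at most two) tight lens spaces, some possibly $(S^3,\xiStd)$; moreover, splitting at $\Lambda_k$ dissolves the isolated sign change, so each lens-space summand carries a tight structure with strictly fewer sign changes than $\zeta$. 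Inducting on the total number of sign changes --- the base case being a shuffling with none left, i.e. a universally tight structure, or $(S^3,\xiStd)$ whose unique filling is the ball and which is not co-fillable --- reduces the classification of fillings of $(L(p,q),\zeta)$ to that of universally tight lens spaces. (For the disjoint unions and connected sums arising along the way, one argues as in the proof of \cref{thm:mixed-stabilizations}, using that $(S^3,\xiStd)$ has a unique filling and that round $1$-handle attachment along a link split by a convex sphere is a Weinstein $1$-handle followed by a Weinstein $2$-handle.)

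I expect the main obstacle to be the combinatorial translation in the middle two steps rather than any symplectic input: one must check that Honda's sign-sequence criterion for virtual overtwistedness always produces, after shuffling, an honest inconsistent subchain whose intermediate components are genuinely max-$\tb$ unknots and whose linking product is $-1$, and that the arc of $\Lambda_k$ can always be isotoped across $S$ legitimately; and one must verify that ``number of sign changes'' is a well-defined complexity that strictly decreases under splitting at $\Lambda_k$, so that the induction terminates exactly at universally tight pieces. Once this bookkeeping is in place, \cref{thm:fillings-of-round-surgery} supplies the entire symplectic content.
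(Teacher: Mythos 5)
Your proposal follows the paper's approach closely: identify an inconsistent subchain in the Legendrian surgery diagram, realize $(S^3,\xi_{\mathrm{std}})$ as $(S^3\#S^3,\xi_{\mathrm{std}}\#\xi_{\mathrm{std}})$ so that a component of the subchain passes over the resulting belt sphere, apply \Cref{thm:fillings-of-round-surgery} (the paper packages this as \Cref{prop:inconsistent-subchain}), and iterate. The only cosmetic difference is the complexity measure driving the induction: the paper simply observes that each breaking strictly decreases the number of components of the surgery link, which is a bit cleaner than your ``number of sign changes,'' since the latter needs to be normalized over Honda's shufflings to be well-defined.

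There is one step you gloss over, and it is the place where a genuine geometric input enters. After applying \Cref{thm:fillings-of-round-surgery}, each $(M_k,\xi_k)$ is a \emph{disjoint union} of two lens spaces, and to continue the induction one must know that any exact/weak filling of such a disjoint union splits as a disjoint union of fillings of the two summands. You invoke only the non-co-fillability of $(S^3,\xi_{\mathrm{std}})$, by analogy with the proof of \Cref{thm:mixed-stabilizations}; but in the present setting both summands are generically nontrivial lens spaces, so that argument does not apply verbatim. The paper instead uses the fact that no tight contact structure on a lens space is weakly symplectically co-fillable, citing a combination of \cite{schonenberger2005planar} and \cite{etnyre2004symplectic}. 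Without that input the induction does not literally terminate in the classification of fillings of single universally tight lens spaces.
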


Because Lisca \cite{lisca2008symplectic} has previously classified the exact/weak symplectic fillings of universally tight lens spaces, \cref{thm:lens-space-fillings} completes the classification of fillings for all lens spaces.  The following theorem continues the narrative to a broader class of 3-manifolds.

\begin{thm}[{\cite[Theorem 1.1]{christian2021symplectic}}]\label{thm:torus-bundle-vague}
The classification of exact/weak symplectic fillings of virtually overtwisted hyperbolic torus bundles reduces to the classification of exact/weak symplectic fillings of lens spaces.
\end{thm}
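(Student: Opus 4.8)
The plan is to realize each virtually overtwisted contact structure $\zeta$ on a hyperbolic torus bundle $Y_A$ as the result of Legendrian surgery along an inconsistent chain that passes over a belt sphere, and then simply to invoke \cref{thm:fillings-of-round-surgery}.

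First I would build the surgery diagram. A hyperbolic torus bundle has monodromy $A\in \mathrm{SL}_2(\Z)$ with $\lvert\mathrm{tr}\,A\rvert>2$, and one encodes $A$ by a negative continued fraction expansion; combining this with Honda's classification of tight contact structures on torus bundles gives a basic-slice description of $(Y_A,\zeta)$ in which the tight structures correspond to sign choices for the basic slices of a $T^2\times[0,1]$ decomposition, the two universally tight structures being the constant-sign choices and the virtually overtwisted ones being everything else. For a virtually overtwisted $\zeta$ I would locate two adjacent, oppositely signed basic slices (these exist precisely because $\zeta$ is not universally tight), using Honda's shuffling lemmas to arrange that this single adjacent pair carries the sign change while the rest of the decomposition is consistent. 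Translating this back into a contact surgery diagram -- writing the relevant piece of $(Y_A,\zeta)$ as Legendrian surgery on a chain of meridional unknots, with the two oppositely signed slices becoming once-stabilized end components $\Lambda_1,\Lambda_n$ and the remaining (consistent) slices becoming max-$\tb$ interior unknots -- and then using $(S^3,\xiStd)\cong(S^3\#S^3,\xiStd\#\xiStd)$ to pull one component across the connected-sum sphere exactly as in the proof of \cref{thm:mixed-stabilizations}, I obtain $(Y_A,\zeta)$ as Legendrian surgery along a linear chain $\Lambda_1\sqcup\cdots\sqcup\Lambda_n$ crossing the resulting belt sphere $S$ exactly twice along one component. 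The substance of this step is verifying the four defining conditions of an inconsistent chain; condition (4) will hold precisely because the two corner stabilizations have opposite sign, i.e.\ because $\zeta$ is virtually overtwisted rather than universally tight.

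With the diagram in hand, \cref{thm:fillings-of-round-surgery} applies directly: every exact/weak symplectic filling of $(Y_A,\zeta)$ is obtained by attaching a symplectic round $1$-handle along $\Lambda_k^+\sqcup\Lambda_k^-$ to an exact/weak filling of one of the finitely many contact manifolds $(M_k,\xi_k)$, $1\le k\le n$. It then remains to recognize each $(M_k,\xi_k)$ as a lens space. This is the diagrammatic shadow of the geometric fact that cutting $Y_A=T^2\times S^1$ open along the mixed fiber torus produces $T^2\times[0,1]$: passing from $L$ to $L\setminus\Lambda_k$ breaks the underlying \emph{cyclic} chain into a \emph{linear} one, whose surgery has lens-space boundary, so $(M_k,\xi_k)$ is a tight lens space -- after absorbing any $(S^3,\xiStd)$ summand that appears, which is harmless by the non-co-fillability of $(S^3,\xiStd)$, again as in the proof of \cref{thm:mixed-stabilizations}. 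Since attaching a symplectic round $1$-handle to a filling of $(M_k,\xi_k)$ conversely always yields a filling of $(Y_A,\zeta)$, the exact/weak fillings of $(Y_A,\zeta)$ are parametrized by round $1$-handle attachments to the fillings of $(M_1,\xi_1),\dots,(M_n,\xi_n)$; this is the asserted reduction, and together with Lisca's classification it in fact classifies them.

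The hard part is the first step. One must show the construction is \emph{exhaustive} -- that every virtually overtwisted structure on every hyperbolic torus bundle is captured by some inconsistent-chain-over-a-belt-sphere diagram -- and, relatedly, that the basic-slice signs can always be shuffled so that exactly the two corner components are stabilized, and only once each, with all interior components at maximal Thurston--Bennequin invariant; this bookkeeping is delicate when the continued fraction expansion of $A$ has repeated entries, and it is where Honda's classification is really being used. A secondary difficulty is pinning down \emph{which} tight lens space each $(M_k,\xi_k)$ is, and how the pair $\Lambda_k^+\sqcup\Lambda_k^-$ sits inside it, since this is the data one must feed into Lisca's list to upgrade the reduction to an explicit count.
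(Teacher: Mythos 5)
Your plan shares the paper's overall strategy of locating an inconsistent chain and invoking \cref{thm:fillings-of-round-surgery}, but it deviates in a way that creates a genuine gap.  You reconstruct the inconsistent chain from a basic-slice decomposition of $(M_A,\zeta)$, use Honda's shuffling lemmas, and then translate back to a surgery diagram that you place in $(S^3,\xiStd)$ and split by writing $(S^3,\xiStd)\cong(S^3\#S^3,\xiStd\#\xiStd)$.  The paper instead works directly with the standard Legendrian surgery diagram for $M_A$, which is a \emph{cyclic} chain of unknots $U_0,\dots,U_n$ in $(S^2\times S^1,\xiStd)$ passing over a Weinstein $1$-handle --- the belt sphere $S$ is already present, and no artificial connected sum with $S^3$ is introduced.  (This is precisely the point the paper is trying to make: the JSJ decomposition can be applied from the surgery diagram without unpacking $(M_A,\zeta)$ into basic slices at all.)  Your $S^3\#S^3$ move and the remark about ``absorbing an $(S^3,\xiStd)$ summand'' are artifacts of thinking about lens spaces; for torus bundles, removing $\Lambda_k$ and the $1$-handle already leaves a linear chain in $(S^3,\xiStd)$, so there is nothing to absorb.

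The genuine gap is that your argument assumes every virtually overtwisted $\zeta$ exhibits stabilizations of \emph{both} signs, writing that ``condition (4) will hold precisely because the two corner stabilizations have opposite sign, i.e.\ because $\zeta$ is virtually overtwisted.''  This is false for negative monodromy.  When $A=-T^{-a_0}S\cdots T^{-a_n}S$, the two contact structures whose surgery links have all stabilizations of a single sign are \emph{still} virtually overtwisted, yet there is no pair of opposite stabilizations anywhere in the diagram, so the inconsistent subchain you propose does not exist.  The paper handles this case separately (the last proposition in \cref{sec:known-results-proofs}): here the inconsistency is supplied not by a sign change among stabilizations but by the linking number $\mathrm{lk}(U_n,U_0)=-1$.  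The chain $U_m\sqcup U_{m+1}\sqcup\cdots\sqcup U_n\sqcup U_0$ (with $m=\max\{k: a_k\ge 3\}$) has like-signed stabilized endpoints but an odd number of negative linking numbers, so condition (4) of the inconsistent-chain definition holds.  Without this case the reduction is not complete, and your ``hard part'' worry about exhaustiveness is exactly this omission.  Relatedly, the shuffling lemmas cannot make all interior components of a subchain max-$\tb$ when the continued fraction has interior entries $a_k\ge 3$; the correct move, as in the lens space proof, is to take the inconsistent subchain to run only through the $a_k=2$ vertices between two stabilized ones, not to shuffle stabilizations away.
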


Each of \cref{thm:lens-space-fillings} and \cref{thm:torus-bundle-vague} were obtained as applications of \cref{thm:jsj}, taking advantage of the classification of tight contact structures on these manifolds by basic slices. In \cref{sec:known-results-proofs} we will recover each of these theorems as applications of \cref{thm:fillings-of-round-surgery}, without the need for a basic slice decomposition of the full manifold under consideration.

\subsection{Symplectic fillings of plumbed 3-manifolds}\label{subsec:intro:plumbed}

Provided we enforce certain restrictions on the weighted graphs used to construct them, graph manifolds are 3-manifolds which admit many Stein fillable contact structures.  Recall that a plumbing graph $\Gamma$ is a weighted graph, and determines a Kirby diagram for a closed, oriented 3-manifold $Y$ as follows: each vertex $v$ corresponds to an unknot in $S^3$, with framing given by the weight $b(v)$, and these unknots are linked according to the edges of $\Gamma$.  In particular, we take the unknots to be oriented and the edges of $\Gamma$ to each be decorated with a $+$ or $-$; the linking number of two unknots whose vertices are connected by an edge is then $\pm 1$ according to the decoration on that edge\footnote{For simply connected graphs, these signs have no effect on the diffeomorphism type of the resulting manifold.}.

With further assumptions on $\Gamma$, each of the unknots described above can be made Legendrian in $(S^3,\xi_{\mathrm{std}})$, with Thurston-Bennequin number $b(v)+1$.  Namely, let us suppose that
\begin{itemize}
    \item $\Gamma$ is either a tree or a graph whose vertices each have degree at most 3;
    \item the weights of $\Gamma$ all satisfy $b(v)\leq -2$.
\end{itemize}
For each vertex $v$, there are $-(b(v)+1)$ Legendrian unknots with Thurston-Bennequin number $b(v)+1$, the rotation numbers of which have the form $b(v)+2k$, $1\leq k\leq -(b(v)+1)$.  We may assign a secondary weight $r(v)$ to each vertex $v$ and use this to determine the rotation number of the corresponding Legendrian unknot.  We will say that a plumbing graph is \emph{fully-decorated} if
\begin{enumerate}
    \item each vertex has a weight $b(v)$ satisfying $b(v)\leq -2$;
    \item each edge has a sign $\pm$;
    \item each vertex has an additional weight $r(v)$ which has the form $b(v)+2k_v$, for some $1\leq k_v\leq -(b(v)+1)$.
\end{enumerate}
Finally, we will say that a plumbing graph is \emph{good} if each vertex $v$ satisfies $b(v)+\deg(v)\leq 0$.  If $\Gamma$ is a good tree, a standard construction (c.f. \cite[Section 4.6]{gompf20234}) produces a Stein handlebody diagram from the data listed here; if $\Gamma$ is instead some good, fully-decorated graph whose vertices all have degree at most 3, then \cite[Section 5]{shah2024tight} provides an algorithm for producing a Stein handlebody diagram.  We review this algorithm in \Cref{sec:plumbed}.

Our primary new application of \Cref{thm:fillings-of-round-surgery} reduces the exact/weak symplectic filling classification problem for such contact manifolds to the same problem for those obtained from \emph{consistent} fully-decorated good plumbing graphs.

\begin{figure}
    \centering
    \includegraphics[width=\linewidth]{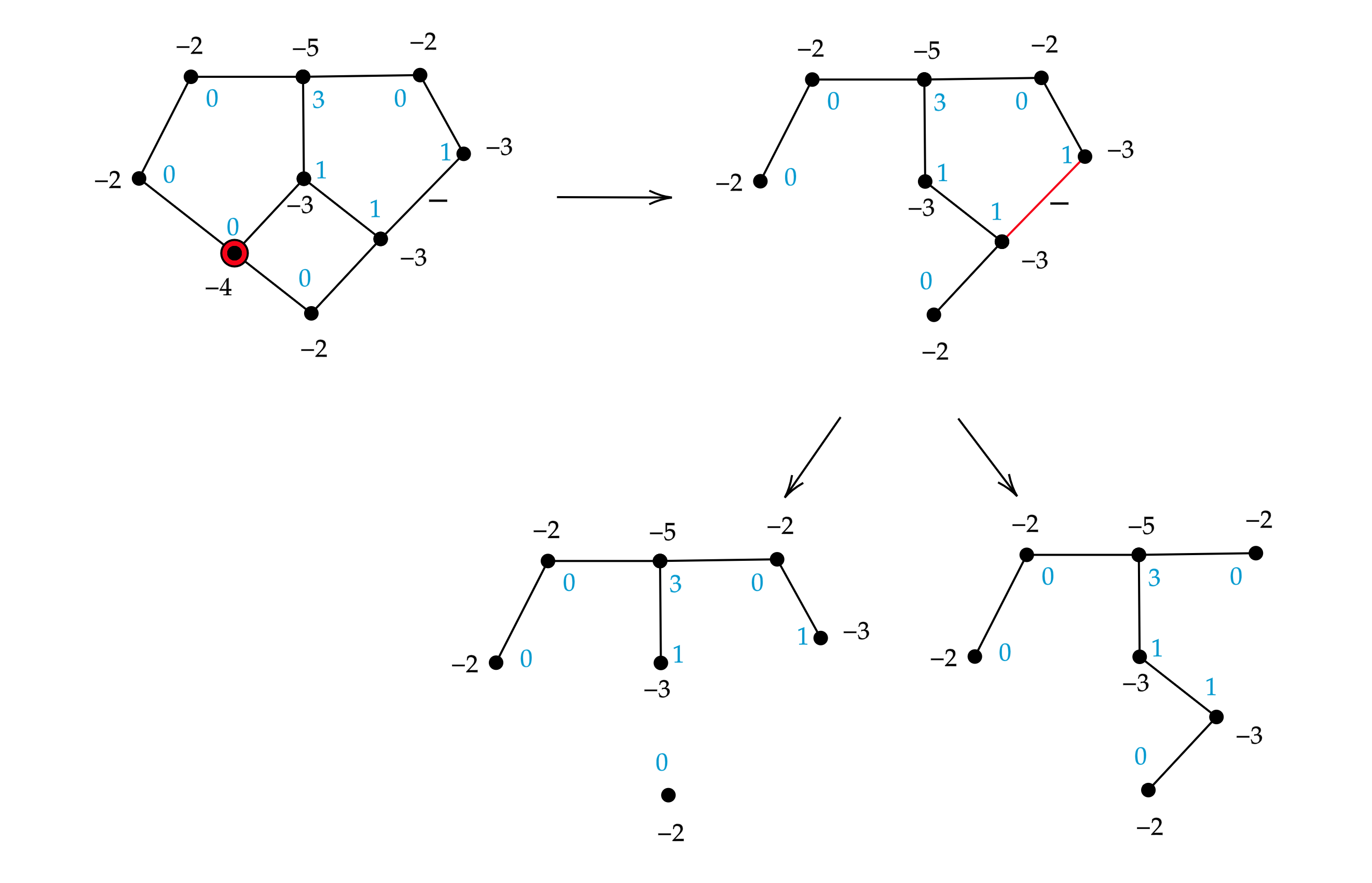}
    \caption{In the upper left is a fully-decorated good plumbing graph $\Gamma$, with undecorated edges assumed to carry a positive sign.  Each vertex $v$ is labeled with $b(v)$ to its left (in black) and $r(v)$ to its right (in blue).  Applying \Cref{thm:fillings-of-round-surgery} to $\Gamma$ produces a subgraph, and iterating the process leaves us with two maximal consistent subgraphs of $\Gamma$.}
    \label{fig:fully-decorated-plumbing}
\end{figure}

\begin{thm}\label{thm:plumbed-3-manifolds}
Let $\Gamma$ be a fully-decorated, good plumbing graph which is either a tree or has no vertices of degree greater than 3, and let $(Y,\zeta)$ be the resulting contact manifold.  Let $\Gamma_1,\ldots,\Gamma_m\subseteq \Gamma$ be the maximal \emph{consistent subgraphs} of $\Gamma$, with resulting contact manifolds $(Y_1,\zeta_1),\ldots,(Y_m,\zeta_m)$.  Then every exact/weak symplectic filling of $(Y,\zeta)$ is obtained by attaching, in a prescribed manner, symplectic round 1-handles to a filling of some $(Y_k,\zeta_k)$, $1\leq k\leq m$.
\end{thm}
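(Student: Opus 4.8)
The strategy is to deduce \Cref{thm:plumbed-3-manifolds} from \Cref{thm:fillings-of-round-surgery} by an inductive argument on the number of "inconsistencies" in the fully-decorated graph $\Gamma$. The starting point is the observation that a fully-decorated good plumbing graph $\Gamma$ is \emph{consistent} precisely when every cycle and every pair of leaf-to-leaf paths behaves coherently with respect to the signs of the rotation-number stabilizations; if $\Gamma$ is not consistent, one can locate a path (or cycle) $\Lambda_1\sqcup\cdots\sqcup\Lambda_n$ inside the Stein handlebody diagram of $(Y,\zeta)$ whose endpoints $\Lambda_1,\Lambda_n$ carry stabilizations of opposite "effective sign" — i.e., such that the product $\sigma_1\cdot\prod \ell k(\Lambda_k,\Lambda_{k+1})\cdot\sigma_n = -1$. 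This is exactly the hypothesis of an inconsistent chain in the sense of the paper's main setup.

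\textbf{Step 1: Realizing an inconsistent chain over a surgery sphere.} Using the identity $(S^3,\xiStd)\cong(S^3\#S^3,\xiStd\#\xiStd)$ as in the proof of \Cref{thm:mixed-stabilizations}, I would introduce an auxiliary $0$-sphere surgery: draw the Stein handlebody diagram for $(Y,\zeta)$ next to an empty diagram, joined by a Weinstein $1$-handle, and then isotope an arc of the inconsistent chain's stabilized component across the resulting convex sphere $S$. This puts us exactly in the hypotheses of \Cref{thm:fillings-of-round-surgery}: we have $(M,\xi)=(Y,\zeta)$ realized as Legendrian surgery along an inconsistent chain passing over $S$, with $(\tilde M,\tilde\xi)=(Y,\zeta)\sqcup(S^3,\xiStd)$ after splitting. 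Here the verification that the chain genuinely satisfies conditions (1)--(4) of the inconsistent-chain definition — in particular that the interior vertices $\Lambda_2,\ldots,\Lambda_{n-1}$ can be arranged as max-$\tb$ Legendrian meridians, unlinked from the rest of $L$, with the correct linking signs — relies on the explicit form of the Stein diagram produced by the algorithm of \cite[Section 5]{shah2024tight}; I would carry this out case-by-case on the local structure of $\Gamma$ (degree $\le 3$, weights $\le -2$, good condition).

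\textbf{Step 2: Applying the main theorem and identifying the pieces.} \Cref{thm:fillings-of-round-surgery} then tells us that every exact/weak filling of $(Y,\zeta)$ is obtained by attaching a symplectic round $1$-handle to a filling of $(M_k,\xi_k)$ along $\Lambda_k^+\sqcup\Lambda_k^-$, for some $1\le k\le n$. Using incofillability of $(S^3,\xiStd)$ as in the proof of \Cref{thm:mixed-stabilizations}, the auxiliary $S^3$ summand forces the filling of the corresponding component to be $(B^4,\omega_{\mathrm{std}})$, and I would then identify $(M_k,\xi_k)$ diagrammatically: deleting $\Lambda_k$ and splitting along $S$ amounts, at the level of plumbing graphs, to deleting an edge (or a vertex) of $\Gamma$, producing a fully-decorated good plumbing graph with strictly fewer inconsistencies. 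The main bookkeeping task is to check that the "good" condition $b(v)+\deg(v)\le 0$ is preserved under this operation, and that the resulting graph is again a tree or has all vertices of degree $\le 3$ — this should follow because deleting edges only decreases degrees, while deleting a vertex of degree $\le 3$ and reconnecting its (at most two surviving) neighbors may need a short argument.

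\textbf{Step 3: Induction and the main obstacle.} Iterating Steps 1--2, the process terminates when no inconsistent chain remains, i.e., exactly at the maximal consistent subgraphs $\Gamma_1,\ldots,\Gamma_m$; composing the round $1$-handle attachments along the way gives the "prescribed manner" in which fillings of $(Y,\zeta)$ are built from fillings of the $(Y_k,\zeta_k)$. The hard part, I expect, is \emph{not} the induction but Step 1's combinatorial claim that every inconsistent fully-decorated good plumbing graph actually contains an inconsistent chain realizable diagrammatically in the required form — this is where the restriction to degree $\le 3$ (or to trees) is essential, since it controls how meridian unknots can be inserted, and where one must reconcile the sign $r(v)$ data on vertices with the $\pm$ edge-decorations to read off the product of linking numbers. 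A secondary subtlety is making the notion of "consistent subgraph" precise enough that the maximal ones are well-defined and that the terminal graphs of the induction coincide with them; I would define consistency in terms of the effective signs along all embedded paths/cycles and check that this is the obstruction to the chain-removal move applying.
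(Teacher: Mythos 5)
Your high-level strategy — realize an inconsistent path in $\Gamma$ as an inconsistent chain over a surgery sphere, apply \Cref{thm:fillings-of-round-surgery}, and induct — is the same as the paper's, but you skip over two technical points that the paper's proof is largely devoted to handling, and both are genuine gaps rather than bookkeeping.

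First, you propose to always introduce an \emph{auxiliary} $S^3$-summand and pass an arc of the chain over the resulting $1$-handle, as in the proof of \Cref{thm:mixed-stabilizations}. The paper instead proves a dedicated lemma (\Cref{lemma:wrapped-up}) which, when the inconsistent chain $\mathcal{C}$ sits inside a cycle of $\Gamma$, uses the \emph{wrapping-up} algorithm of \cite[Section 5]{shah2024tight} to choose the Hamiltonian path $a$ so that $\mathcal{C}$ passes over a $1$-handle \emph{intrinsic} to the Stein diagram, namely the one corresponding to the cycle itself. This matters because \Cref{thm:fillings-of-round-surgery} identifies $(M_k,\xi_k)$ with Legendrian surgery on $(\tilde M,\tilde\xi)$ along $L\setminus\Lambda_k$, where $(\tilde M,\tilde\xi)$ is the diagram with \emph{that} $1$-handle and the chain deleted. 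If the chain is in a cycle of $\Gamma$ and you use only an auxiliary $1$-handle, deleting $\Lambda_k$ and the auxiliary handle leaves the cycle's own $1$-handle sitting in the Stein diagram, and the resulting manifold is not visibly the plumbed contact manifold of $\Gamma-\{v_k\}$ (whose Stein diagram has one fewer $1$-handle, since deleting $v_k$ has killed a cycle). You flag that identifying $(M_k,\xi_k)$ "at the level of plumbing graphs" needs checking, but your mechanism does not supply the identification; the paper's does, by construction.

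Second, the paper's proof hinges on a minimality argument that you do not reproduce: take a minimal inconsistent path $e_1,\ldots,e_m$ in the poset of inconsistent paths and deduce that every interior vertex $v_j$ ($2\le j\le m$) satisfies $r(v_j)=0$, hence $b(v_j)=-2$, hence $\deg(v_j)=2$ by goodness. It is precisely this that ensures the interior unknots are unstabilized $\tb=-1$ meridians, unlinked from the rest of $L$, so that conditions (2)--(3) of the inconsistent-chain definition hold. You note this verification is needed and say you would "carry this out case-by-case," but without the minimality step the claim is not clear, and for a generic inconsistent path it is simply false. Relatedly, the paper splits into two cases at the start — some $r(v)$ not extreme (handled via \Cref{thm:mixed-stabilizations}) versus all weights extreme with an inconsistent path — while your proposed definition of consistency (effective signs along paths/cycles) omits the extremality condition and thus misses the first case.
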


Let us note that every symplectically fillable contact structure on a lens space or hyperbolic torus bundle can be depicted via a fully-decorated, good plumbing graph with no vertices of degree greater than 3.  In this way, both \Cref{thm:lens-space-fillings} and \Cref{thm:torus-bundle-vague} can be viewed as corollaries of \Cref{thm:plumbed-3-manifolds}. 

At last, we say that a fully-decorated plumbing graph $\Gamma$ is \emph{consistent} if
\begin{enumerate}
    \item all its secondary weights are extreme, meaning that $r(v)=\pm(b(v)+2)$, for every $v\in\Gamma$;
    \item for every path $e_1,\ldots,e_m$ in $\Gamma$, with corresponding vertices $v_1,\ldots,v_m,v_{m+1}$, the product
    \[
    r(v_1)\cdot \sigma(e_1)\cdots\sigma(e_m)\cdot r(v_{m+1})
    \]
    is non-negative.
\end{enumerate}
Notice that if $\Gamma$ produces a lens space or hyperbolic torus bundle, then the resulting contact structure is universally tight if and only if $\Gamma$ is consistent.

\subsection{Organization}
In \Cref{sec:background} we very briefly recall the vocabulary necessary for using \Cref{thm:jsj} and then use \Cref{thm:jsj} to prove \Cref{thm:fillings-of-round-surgery} in \Cref{sec:proof}.  In \Cref{sec:known-results-proofs} we prove \Cref{thm:lens-space-fillings} and \Cref{thm:torus-bundle-vague} using \Cref{thm:fillings-of-round-surgery}, then prove \Cref{thm:plumbed-3-manifolds} in \Cref{sec:plumbed}.

\subsection*{Acknowledgments} AC is partially supported by an AMS-Simons Travel Grant and NSF grant DMS-2532551.

\section{Background}\label{sec:background}
We assume familiarity with contact surgery (c.f. \cite{GCT,ozbagci2013surgery}), Kirby and Stein diagrams for contact and symplectic manifolds (c.f. \cite{ozbagci2013surgery,gompf20234}), and basic slice decompositions (c.f. \cite{honda2000classification,honda2000classification2}), and gather here some of the other key tools in our arguments.

\subsection{Splitting along convex spheres}\label{subsec:background:spheres}
The setup of \Cref{thm:fillings-of-round-surgery} considers a contact 3-manifold $(M,\xi)$ obtained from another via surgery along a 0-sphere; here we offer some recollections on this construction.  First, surgery along a 0-sphere $\{q_+,q_-\}\subset(\tilde{M},\tilde{\xi})$ is performed by removing Darboux ball neighborhoods of the points $q_+$ and $q_-$ and identifying the resulting boundary components:
$$M := (M\setminus(N(q_+)\cup N(q_-)))/\sim,$$
where $N(q_+)$ and $N(q_-)$ are open standard neighborhoods of $q_+$ and $q_-$, and $\sim$ identifies the spheres $\partial N(q_+)$ and $\partial N(q_-)$ in a manner which preserves their characteristic foliations.

Conversely, suppose $S\subset(M,\xi)$ is a convex sphere whose dividing set $\Gamma_s$ consists of a single component.  Then we may define
\[
\tilde{M} := D_+ \cup_{\psi_+} \overline{M\setminus S} \cup_{\psi_-} D_-,
\]
where each of $D_+$ and $D_-$ is a Darboux ball and the maps $\psi_\pm\colon\partial D_\pm\to\partial(\overline{M\setminus S})$ preserve characteristic foliations.

We establish this notation primarily so that we may split Legendrian knots which pass through $S$ in a well-defined manner.  Namely, suppose a Legendrian knot $L \subset (M,\xi)$ intersects $S$ in exactly two points $p_+$ and $p_-$ which are algebraically cancelling with respect to $\Gamma_S$.  In this situation, we adopt the convention that splitting along $S$ produces a two-component Legendrian link $L_+\sqcup L_- \subset \tilde{M}$ constructed as
\[
L_+ \sqcup L_- := \gamma_+ \cup_{\psi_+} L \cup_{\psi_-} \gamma_-,
\]
where $\gamma_\pm\subset D_\pm$ is a trivial, unstabilized Legendrian arc in the Darboux ball $D_\pm$ connecting $p_+$ to $p_-$.  Because these arcs are trivial, we may isotope $L_+\sqcup L_-$ to avoid intersection with the Darboux balls $D_\pm\subset\tilde{M}$, and thus also consider $L_+\sqcup L_-$ as a link in $M$.

A fundamental result due to Eliashberg \cite{Eli01} states that convex spheres also govern the behavior of symplectic fillings: if $(W,\omega)$ is a symplectic filling of $(M,\xi)$ and $S\subset M$ is a convex sphere, then $(W,\omega)$ can be obtained by attaching a Weinstein 1-handle to a symplectic filling $(W',\omega')$, with the belt sphere of this 1-handle being $S$.  As a result, decompositions of $(M,\xi)$ along convex spheres naturally extend to the fillings of $(M,\xi)$.

\subsection{Symplectic round $1$-handles}

Here we recall the basic structure of a symplectic round $1$-handle and its effect on the boundary contact manifold.  

A \emph{round $1$-handle} is topologically given by $S^{1} \times D^{1} \times D^{2}$, attached to a $4$-manifold along $S^{1} \times \partial D^{1} \times D^{2}$.  It may be viewed as the product of $S^{1}$ with a standard 3-dimensional $1$-handle.  In the symplectic setting, one endows this piece with a symplectic form compatible with a Liouville structure such that its boundary carries an induced contact structure.  We will not need the details of this construction here, which the reader can find in \cite{adachi2014contact,adachi2017round}.

From the contact perspective, attaching a symplectic round $1$-handle to a symplectic filling of a contact $3$-manifold $(M,\xi)$ has the effect of performing contact surgery along a pair of Legendrian knots $K_{+}$ and $K_{-}$, which appear as the attaching regions of the handle.  More precisely, the operation corresponds to attaching a Weinstein $1$-handle along $S^0=\{p_+,p_-\}$, with $p_{\pm}\in K_{\pm}$, followed by a Weinstein $2$-handle attached along $K_+\#K_+$.

As a result, the boundary contact manifold changes by a \emph{round surgery} along $K_{+} \sqcup K_{-}$.  Contact round surgery along $K_+\sqcup K_-\subset(M,\xi)$ removes standard open neighborhoods $N(K_+)$ and $N(K_-)$ from $M$ and identifies the resulting boundary components in a manner which identifies the meridians (with an orientation-reversal) and dividing sets of $\partial N(\Lambda_+)$ and $\partial N(\Lambda_-)$.  This procedure preserves tightness under suitable conditions and provides a systematic way to construct new contact manifolds from known ones.  

\subsection{Decomposing symplectic fillings along mixed tori}

In this subsection we describe the use of \Cref{thm:jsj}.  Let us suppose that $(Y,\zeta)$ admits a \emph{mixed torus} $T\subset(Y,\zeta)$, meaning that $T$ is convex and admits a neighborhood $T^2\times[-1,1]$ in $Y$ so that
\begin{itemize}
    \item $T=T^2\times\{0\}$;
    \item the restriction of $\xi$ to $T^2\times[-1,1]$ is virtually overtwisted;
    \item each of $T^2\times[-1,0]$ and $T^2\times[0,1]$ is a basic slice.
\end{itemize}
The symplectic JSJ decomposition uses $T$ to remove a symplectic round 1-handle from any exact/weak filling of $(Y,\zeta)$.

Namely, let us recall from \Cref{thm:jsj} that the symplectic JSJ decomposition theorem considers a mixed torus neighborhood $T^2\times[-1,1]\subset(Y,\zeta)$ with slopes $s_{-1}=-1$, $s_0=\infty$, and $s_1$, as well as an exact/weak symplectic filling $(W,\omega)$ of $(Y,\zeta)$, and produces an exact/weak symplectic filling $(W',\omega')$ such that
\begin{enumerate}
    \item $(W,\omega)$ is recovered from $(W',\omega')$ by symplectic round 1-handle attachment;
    \item the boundary of $(W',\omega')$ results from splitting $(Y,\zeta)$ along $T$ with some slope $s$ satisfying $0\leq s\leq s_1-1$.
\end{enumerate}
The slopes $s_i$ refer to the slopes of the dividing curves on the convex tori $T^2\times\{i\}$, $i=-1,0,1$.  By modifying the identification of $T^2$ with $\mathbb{R}^2/\mathbb{Z}^2$, we may always normalize these slopes so that $s_{-1}=-1$ and $s_0=\infty$, as in the statement of the theorem.  With these slopes fixed, we may \emph{split $(Y,\zeta)$ along $T$ with slope $s$}, for any integer $s$, by attaching a solid torus to $Y\setminus T$ along each of its two boundary components:
\[
Y_s := S_{-1} \cup_{\psi_{-1}} (Y\setminus T) \cup_{\psi_1} S_1.
\]
The gluing maps $\psi_i\colon\partial S_i\to T_i$ are chosen so that the meridian of $S_i$ is mapped to a curve of slope $s$ in $T_i$; here $T_{-1}$ and $T_1$ are the components of $\partial(\overline{Y\setminus T})$.
 The contact structure $\zeta_s$ on $Y_s$ agrees with $\zeta$ on $Y\setminus T$, while the solid tori $S_{-1}$ and $S_1$ carry the unique tight contact structures determined by the characteristic foliation along $\partial S_{-1}$ and $\partial S_1$.

 The practical result of \Cref{thm:jsj} is that identifying a mixed torus in a contact manifold $(Y,\zeta)$ allows us to reduce the problem of classifying exact/weak symplectic fillings for $(Y,\zeta)$ to the same problem for the collection of the contact manifolds which result from splitting $(Y,\zeta)$ along $T$ with an allowable slope.

\section{Proof of \cref{thm:fillings-of-round-surgery}}\label{sec:proof}

Let us recall the hypotheses of \cref{thm:fillings-of-round-surgery}.  We have $(M,\xi)$, obtained via surgery along a 0-sphere from a closed, cooriented (but not necessarily connected) 3-dimensional contact manifold $(\tilde{M},\tilde{\xi})$, and denote by $S\subset (M,\xi)$ the 2-dimensional belt sphere of this surgery.  We also have an inconsistent chain $L\subset(M,\xi)$ which intersects $S$ exactly twice, with these intersections occurring along a single component of $L$ and canceling algebraically.

If $L=\Lambda_1\sqcup\cdots\sqcup\Lambda_n$, then for each $1\leq k\leq n$ we have a contact manifold $(M_k,\xi_k)$ obtained from $(\tilde{M},\tilde{\xi})$ via Legendrian surgery along the link $L\setminus\Lambda_k$ and containing a two-component link $\Lambda_k^+\sqcup\Lambda_k^-$.

We prove \cref{thm:fillings-of-round-surgery} as an application of the JSJ decomposition for symplectic fillings, and thus there are two steps:
\begin{enumerate}
    \item In \cref{prop:mixed-belt-torus} we identify a mixed torus $T\subset (Y,\zeta)$, along with a standard mixed neighborhood $T^2\times[-1,1]\subset (Y,\zeta)$ having slopes $s_{-1}=-1$, $s_0=\infty$, and $s_1=n$.
    \item In \cref{prop:splitting-along-belt-torus} we verify that the contact manifold which results from splitting $(Y,\zeta)$ along $T$ with slope $s=k-1$ is precisely $(M_{k},\xi_{k})$, for $k=1,\ldots,n$.
\end{enumerate}

\begin{prop}\label{prop:mixed-belt-torus}
Under the hypotheses of \cref{thm:fillings-of-round-surgery}, $(Y,\zeta)$ contains a mixed torus $T$ which admits a standard mixed neighborhood $T^2\times[-1,1]\subset (Y,\zeta)$ such that $T=T^2\times\{0\}$, $s_{-1}=-1$, $s_0=\infty$, and $s_1=n$.
\end{prop}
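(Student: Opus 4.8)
The plan is to build the mixed torus $T$ as the "surgery torus" living in the core of the round $1$-handle attachment that glues $(\tilde M,\tilde\xi)$ into $(M,\xi)$ along the $0$-sphere, after Legendrian surgery on the inconsistent chain $L$ has been performed. Recall the component $\Lambda_k$ of $L$ meeting $S$ in two algebraically cancelling points; pinching along $S$ splits it into $\Lambda_k^+\sqcup\Lambda_k^-$ and presents the passage across $S$ as a Weinstein $1$-handle. So $(Y,\zeta)$ is obtained from a contact manifold built on $(\tilde M,\tilde\xi)$ by attaching a round $1$-handle along $\Lambda_k^+\sqcup\Lambda_k^-$; topologically the cocore of this round handle is a torus $T$, and we take $T^2\times[-1,1]$ to be a neighborhood of it inside the round handle region. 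First I would set up coordinates on this $T^2$ so that $s_0=\infty$ is the meridian disk slope of the round handle and $s_{-1}=-1$; this is a normalization, exactly as in the paragraph following \Cref{thm:jsj}.

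The substantive point is computing $s_1$ and checking the middle layer is a mixed (virtually overtwisted) basic-slice pair. For this I would trace the contact framing data through the chain. The key input is the inconsistency condition: $\sigma_1\cdot\ell k(\Lambda_1,\Lambda_2)\cdots\ell k(\Lambda_{n-1},\Lambda_n)\cdot\sigma_n=-1$, together with the fact that $\Lambda_2,\ldots,\Lambda_{n-1}$ are max-$\tb$ meridional unknots (so Legendrian surgery on each is a "continued fraction block" contributing a $-1$ to a chain of $\infty$'s in the surgery calculus). I expect that doing the Rolfsen/slam-dunk moves on the meridian unknots $\Lambda_2,\ldots,\Lambda_{n-1}$ collapses the chain to a single Legendrian knot carrying the two stabilizations $\sigma_1$ and $\sigma_n$ with the sign relation forcing them to be opposite — i.e. one positive and one negative stabilization straddling the round handle — and that the resulting framed picture for the round-handle torus has $s_1 = n$. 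The count $n$ should emerge precisely because each of the $n-2$ intermediate unknots, plus the two stabilized ends, contributes one to the total; concretely I would induct on $n$, the base case $n=2$ being the single stabilized-both-ways knot of \Cref{thm:mixed-stabilizations}, and the inductive step absorbing one meridian unknot via a slam-dunk that shifts $s_1$ by $1$.

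To see the mixed-torus conditions themselves: once the chain is reduced to a positively- and negatively-stabilized knot passing over the belt sphere, the region $T^2\times[-1,1]$ is the neighborhood $\partial N(S_-(\cdots))$ used in the original proof of \Cref{thm:mixed-stabilizations} in \cite{christian2018jsj}. There it is shown that Legendrian surgery on a knot stabilized with both signs makes the boundary torus of a tubular neighborhood into a basic slice whose stacking with the adjacent slice is virtually overtwisted; the correspondence between stabilizations and basic slices (Honda's classification) gives that $T^2\times[-1,0]$ and $T^2\times[0,1]$ are basic slices with opposite signs, hence their union is virtually overtwisted. I would quote that argument for the mixedness and only supply the new bookkeeping — the slope $s_1=n$ — which is where the inconsistent-chain hypothesis does its work.

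The main obstacle will be the slope computation $s_1=n$: keeping track of the basis changes on $T^2$ as one slam-dunks the meridian unknots and the intermediate linking numbers $\ell k(\Lambda_i,\Lambda_{i+1})$ enter the continued-fraction arithmetic, and confirming that only the \emph{sign} relation (not the individual magnitudes of the $\ell k(\Lambda_i,\Lambda_{i+1})$, nor the sign decorations beyond their product) affects $s_1$. I expect the cleanest route is to reduce everything to the $n=2$ case first by splitting off the meridians as in \Cref{fig:pinching}, handle the $n=2$ case directly against \cite{christian2018jsj}, and then argue the general slope count by the inductive slam-dunk step.
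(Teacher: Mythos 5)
Your geometric picture is correct and matches the paper's: the mixed torus is the surgery torus of the round $1$-handle attached along $\Lambda_k^{+}\sqcup\Lambda_k^{-}$, sandwiched between two basic slices created by the stabilizations at the extreme components of the chain, and the slope $s_1=n$ is to be extracted by tracking the chain through the surgeries. The paper proceeds exactly this way: it isotopes so that $\Lambda_1$ meets $S$, realizes $(Y,\zeta)$ as Legendrian surgery on $\tilde L=L\setminus\Lambda_1$ followed by round surgery along $\Lambda_1^{+}\sqcup\Lambda_1^{-}$, and shows that the round-surgery torus is flanked by the basic slices $N(\tilde\Lambda_1^{+})\setminus N(\Lambda_1^{+})$ and $N(\tilde\Lambda_n)\setminus N(\Lambda_1^{-})$.

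However, there are a few substantive gaps in the slope computation, which you rightly flag as the main obstacle. First, your proposed slam-dunk reduction ``collapses the chain to a single Legendrian knot'' is exactly what one must \emph{not} do if one wants the slope $s_1=n$: each slam-dunk discards the information that produces the count $n$. What the paper does instead is iterate the result of \cite{ding2009handle}: after Legendrian surgery on $\Lambda_j$, the meridional unknot $\Lambda_{j-1}$ becomes isotopic to the contact framing of $\Lambda_j$, so that after surgery along $\Lambda_n,\ldots,\Lambda_2$ the meridian of $N(\Lambda_1^{-})\subset N(\tilde\Lambda_n)$ is $n\mu_- - (n-1)\lambda_-$; the slope $s_1=n$ then falls out after pushing the gluing map $\gamma$ from round surgery through this and applying a normalization of coordinates. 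The smooth slam-dunk gives the same continued-fraction arithmetic $[-2,\ldots,-2]$, but you still have to argue that it respects the contact framings and dividing sets (the objects defining the slopes $s_i$), which is exactly the content of the \cite{ding2009handle} step. Second, your indexing is off by one: in the paper's convention the single knot stabilized with both signs is the $n=1$ case, giving $s_1=1$, not $n=2$; the first multi-component chain is $n=2$. Third, the claim that the inconsistency forces $\sigma_1$ and $\sigma_n$ to be ``opposite'' after collapsing is imprecise. The relation is $\sigma_1\cdot\prod\ell k(\Lambda_i,\Lambda_{i+1})\cdot\sigma_n=-1$, and the intermediate linking numbers can contribute an odd number of negatives, making $\sigma_1$ and $\sigma_n$ equal. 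The paper resolves this by observing that the signs of the two flanking basic slices are computed relative to the identification of $T$ with $\mathbb{R}^2/\mathbb{Z}^2$ induced by the round-surgery gluing map $\gamma$, which reverses the meridian ($\gamma(\mu_+)=(n-1)\lambda_- -n\mu_-$); the inconsistency hypothesis then forces the slice signs to be opposite \emph{after} accounting for this orientation data, not before.
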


\begin{proof}
Performing a Legendrian isotopy if necessary, let us assume that $\Lambda_1$ is the unique component of $L$ intersecting $S\subset(M,\xi)$, so that we may consider the link $\tilde{L}:=L\setminus\Lambda_1$ as being contained in $(\tilde{M},\tilde{\xi})$.  In fact, the intersection of $\Lambda_1$ with $S$ determines the link $\Lambda_1^+\sqcup\Lambda_1^-$, and we may isotope $L$ to ensure that
\begin{itemize}
    \item if $n=1$, then $\Lambda_1^+$ and $\Lambda_1^-$ contain at least one positive and negative stabilization, respectively;
    \item if $n\geq 2$, then $\Lambda_1^-$ is a Legendrian meridian of $\Lambda_2$.
\end{itemize}
Because $\Lambda_1^+\sqcup\Lambda_1^-$ is disjoint from $S$, we may consider this link to be contained in $(\tilde{M},\tilde{\xi})$ and note that $(Y,\zeta)$ is obtained from $(\tilde{M},\tilde{\xi})$ by performing Legendrian surgery along $\tilde{L}$ and round surgery along $\Lambda_1^+\sqcup\Lambda_1^-$.  That is, in addition to Legendrian surgery along $\tilde{L}$, standard neighborhoods of $\Lambda_1^+$ and $\Lambda_1^-$ are removed, with their resulting boundary tori identified according to their meridians and dividing sets.  Our first task, then, is to determine these dividing sets after Legendrian surgery along $\tilde{L}$ has been performed.

First, let us choose a Legendrian knot $\tilde{\Lambda}_1^+$ from which $\Lambda_1^+$ is obtained by stabilization and denote by $\lambda_+$ and $\mu_+$ the contact longitude and meridian, respectively, of $N(\tilde{\Lambda}_1^+)$.  Then the contact longitude of $\Lambda_1^+$ is $\lambda_+-\mu_+$.  If $n=1$, then we may use the same reasoning to choose a solid torus neighborhood with contact longitude and meridian $\lambda_-$ and $\mu_-$ with respect to which the contact longitude of $\Lambda_1^-$ is $\lambda_--\mu_-$.  Notice that the thickened tori $N(\tilde{\Lambda}_1^\pm)\setminus N(\Lambda_1^\pm)$ are basic slices, since their two torus boundary components have dividing sets parallel to $\lambda_\pm$ and $\lambda_\pm-\mu_\pm$, respectively.

If $n\geq 2$, then let us choose a Legendrian knot $\tilde{\Lambda}_n$ from which $\Lambda_n$ is obtained by stabilization.  Observe that $\tilde{L}$ may be assumed to be contained in a neighborhood $N(\tilde{\Lambda}_n)$, since $\Lambda_n=S_{\sigma_2}(\tilde{\Lambda}_n)$ is contained in $N(\tilde{\Lambda}_n)$, $\Lambda_{n-1}$ is a meridian of $\Lambda_n$, and so on.  Using $\lambda_-$ and $\mu_-$ to denote the contact longitude and meridian, respectively, of $N(\tilde{\Lambda}_n)$, we see that $\lambda_--\mu_-$ is the contact longitude of $\Lambda_n$.  Performing Legendrian surgery along $\Lambda_n$ will therefore replace the meridian $\mu_-$ with
\[
\mu_--(\lambda_--\mu_-) = 2\mu_--\lambda_-.
\]
Following this surgery along $\Lambda_n$, \cite[Proposition 2]{ding2009handle} tells us that the meridian $\Lambda_{n-1}$ will become isotopic (as an unoriented Legendrian knot) to the contact framing $\lambda_--\mu_-$.  Legendrian surgery along $\Lambda_{n-1}$ then transforms the meridian into
\[
(2\,\mu_--\lambda_-) - (\lambda_--\mu_-) = 3\,\mu_- - 2\,\lambda_-.
\]
We iterate this process and observe that, following Legendrian surgery along $\Lambda_2\sqcup\cdots\sqcup\Lambda_n$, the meridian $\Lambda_1^-$ has a neighborhood $N(\Lambda_1^-)\subseteq N(\tilde{\Lambda}_n)$ whose boundary has dividing set parallel to $\lambda_--\mu_-$ and whose meridian is
\[
n\,\mu_- - (n-1)\,\lambda_-.
\]
Moreover, because the complement $N(\tilde{\Lambda}_n)\setminus N(\Lambda_1^-)$ is a thickened torus with boundary dividing curves parallel to $\lambda_-$ and $\lambda_--\mu_-$, it is a basic slice.

Round surgery will now delete $N(\Lambda_1^+)$ and $N(\Lambda_1^-)$ and identify their boundaries according to a map $\gamma\colon\partial N(\Lambda_1^+)\to\partial N(\Lambda_1^-)$ which satisfies
\begin{equation}\label{eq:gluing-map}
\gamma(\lambda_+-\mu_+) = \lambda_- - \mu_-
\quad\text{and}\quad
\gamma(\mu_+) = (n-1)\,\lambda_- - n\,\mu_-.
\end{equation}
Notice that $\gamma$ reverses the orientation of the meridians.  An immediate effect of round surgery is that the resulting surgery torus $T$ is now sandwiched between the basic slices $N(\tilde{\Lambda}_1^+)\setminus N(\Lambda_1^+)$ and $N(\tilde{\Lambda}_n)\setminus N(\Lambda_1^-)$.  The signs of these basic slices are determined by the signs of the stabilizations used to construct them, along with the identification of $T$ with $\mathbb{R}^2/\mathbb{Z}^2$, and the definition of inconsistent chains determines that these signs are opposite.  So the surgery torus $T$ that results from round surgery is mixed, and it remains to compute the slopes of its standard mixed neighborhood.

To this end, notice that
\[
\gamma(\lambda_+) = n\,\lambda_- - (n+1)\,\mu_-,
\]
so the dividing set of $\partial N(\Lambda_1^+)$, which is parallel to $\lambda_+$, becomes parallel to $n\,\lambda_- - (n+1)\,\mu_-$ following the round surgery.  We now have a neighborhood $T^2\times[-1,1]$ of the mixed torus $T=T\times\{0\}$ whose relevant slopes are given by
\[
s_{-1} = -\dfrac{n}{n+1},
\quad
s_0 = -1,
\quad\text{and}\quad
s_1 = \infty.
\]
(The latter two slopes corresponding to $\lambda_--\mu_-$ and $\lambda_-$, respectively.)  By applying the change of coordinates
\[
\left(\begin{matrix}
    1 & 1\\
    n-1 & n
\end{matrix}\right),
\]
where $\mu_-$ is represented by the vector $(1,0)^T$ and $\lambda_-$ by $(0,1)^T$, we obtain the slopes
\[
s_{-1} = -1,
\quad
s_0 = \infty,
\quad\text{and}\quad
s_1 = n,
\]
as desired.
\end{proof}

To complete the proof of \cref{thm:fillings-of-round-surgery} we must determine the effect of applying the JSJ decomposition for symplectic fillings to $(Y,\zeta)$ the mixed torus $T$ identified in \cref{prop:mixed-belt-torus}.  Because we have $s_1=n$ in the standard mixed neighborhood $T^2\times[-1,1]$, \cref{thm:jsj} leads us to consider the result of splitting $(Y,\zeta)$ along $T$ with slope $0\leq s\leq n-1$. 

\begin{prop}\label{prop:splitting-along-belt-torus}
Under the hypotheses of \cref{thm:fillings-of-round-surgery}, splitting $(Y,\zeta)$ along the belt torus $T$ with slope $0\leq s\leq n-1$ yields the contact manifold $(M_{s+1},\xi_{s+1})$.
\end{prop}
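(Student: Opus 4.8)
The goal is to identify the contact manifold obtained by splitting $(Y,\zeta)$ along $T$ with slope $s$, where $0\leq s\leq n-1$, as $(M_{s+1},\xi_{s+1})$. Recall from the proof of \Cref{prop:mixed-belt-torus} that $(Y,\zeta)$ was built from $(\tilde M,\tilde\xi)$ by performing Legendrian surgery along $\tilde L = L\setminus\Lambda_1$ and then round surgery along $\Lambda_1^+\sqcup\Lambda_1^-$; the torus $T$ is precisely the surgery torus of this round surgery, sitting between the two basic slices $N(\tilde\Lambda_1^+)\setminus N(\Lambda_1^+)$ and $N(\tilde\Lambda_n)\setminus N(\Lambda_1^-)$. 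Splitting along $T$ with slope $s$ deletes $T$ and glues in solid tori $S_{-1}$ and $S_1$ along each boundary component, with meridians sent to slope-$s$ curves. The key point is that attaching a solid torus of the appropriate slope to one side of a round-surgery torus \emph{undoes} the round surgery on that side, filling back in a neighborhood of $\Lambda_1^+$ or $\Lambda_1^-$; the question is which stabilizations and meridian-pushoffs survive.

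**First steps.** First I would set up coordinates consistently with \Cref{prop:mixed-belt-torus}: use $\mu_-,\lambda_-$ (the meridian and contact longitude of $N(\tilde\Lambda_n)$, or of $N(\tilde\Lambda_1^-)$ when $n=1$) as a basis, so that $s_{-1}=-1$, $s_0=\infty$, $s_1=n$ after the change of coordinates $\left(\begin{smallmatrix}1&1\\n-1&n\end{smallmatrix}\right)$. Then I would translate the slope-$s$ splitting curve back through this change of coordinates to see which curve on $\partial N(\Lambda_1^-)$ (equivalently on $\partial N(\Lambda_1^+)$) is being filled. I expect that filling with slope $s = k-1$ reconstitutes $N(\tilde\Lambda_n)$ but with the meridian of the reglued solid torus corresponding to a $k$-fold ``Dehn-twisted'' curve — matching the formula $n\mu_- - (n-1)\lambda_-$ appearing in the proof of \Cref{prop:mixed-belt-torus} when $k=n$, and interpolating for smaller $k$. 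The heart of the argument is to recognize that this interpolation exactly reverses the chain of meridian computations: undoing Legendrian surgeries along $\Lambda_2,\dots,\Lambda_n$ one at a time (via \cite[Proposition 2]{ding2009handle}) peels off the meridian chain, and truncating the fill at slope $s=k-1$ corresponds to \emph{not} undoing the last $n-k$ surgeries — i.e., it leaves the link $\Lambda_{k+1}\sqcup\cdots\sqcup\Lambda_n$ intact while reabsorbing $\Lambda_1^-,\Lambda_2,\dots,\Lambda_k$.

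**The structural claim.** So I would argue: splitting along $T$ with slope $s=k-1$ on the $\Lambda_1^-$ side refills a standard neighborhood in which the reabsorbed core knot is $\tilde\Lambda_{k}$ (the destabilization of $\Lambda_k$), now surgered only along $\Lambda_{k+1}\sqcup\cdots\sqcup\Lambda_n$ and containing the meridian chain $\Lambda_{k-1},\dots,\Lambda_2,\Lambda_1^-$ as Legendrian pushoffs — but these are exactly the surgery curves $L\setminus\Lambda_k$ (minus $\Lambda_k$ itself, which has been ``reopened'' into $\Lambda_k^\pm$). Simultaneously, the $\Lambda_1^+$ side is filled to return a neighborhood $N(\tilde\Lambda_1^+)$, reattaching the arc that was cut by $S$; combined with the reabsorbed $\Lambda_1^-$ piece when $n=1$, or with the chain when $n\geq 2$, this reconstitutes the component $\Lambda_k^+\#\Lambda_k^- \simeq \Lambda_k$. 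The upshot: the split-open manifold is $(\tilde M,\tilde\xi)$ surgered along $L\setminus\Lambda_k = L_k$ together with a reglued neighborhood — which is exactly the definition of $(M_{k},\xi_{k})$ with $k=s+1$. I would be careful to check the tight contact structures on the reglued solid tori $S_{-1},S_1$ agree with the standard neighborhoods of $\tilde\Lambda_1^+$ and $\tilde\Lambda_k$: since each $T^2\times[0,1]$ and $T^2\times[-1,0]$ sub-basic-slice is determined by boundary slopes and signs, and the signs were pinned down by the inconsistent-chain condition, this is forced.

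**Main obstacle.** The hard part will be the bookkeeping that converts ``filling with slope $s=k-1$'' into ``undoing exactly the last $n-k$ Legendrian surgeries and leaving the first $k$.'' This requires tracking, through the change-of-coordinates matrix, how the slope-$s$ curve interacts with the iterated meridian formula $(j+1)\mu_- - j\lambda_-$ (for $j$ surgeries done), and verifying that the intermediate slopes $0,1,\dots,n-1$ correspond bijectively and order-preservingly to the partial chains $k=1,\dots,n$. I also need to handle the edge cases cleanly: $n=1$ (where $\Lambda_1^-$ has its own destabilization $\tilde\Lambda_1^-$ rather than being a meridian, and $L\setminus\Lambda_1=\varnothing$, so $M_1=\tilde M$ and the round surgery is undone entirely back to $(\tilde M,\tilde\xi)$), and the coincident case $\Lambda_1=\Lambda_n$, where the self-linking interpretation from the Remark must be used so that $\Lambda_k^+\#\Lambda_k^-$ is read correctly. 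Modulo this linear-algebra-and-isotopy accounting, the proposition follows directly from the construction in \Cref{prop:mixed-belt-torus} together with the fact that splitting a round-surgery torus with the ``right'' slope inverts the round surgery.
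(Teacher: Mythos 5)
Your opening and ``first steps'' track the paper exactly: normalize slopes via the matrix $\left(\begin{smallmatrix}1&1\\n-1&n\end{smallmatrix}\right)$, push the slope-$s$ curve through $\gamma^{-1}$, and interpret the result as a replacement of the solid tori $N(\Lambda_1^\pm)$.  But the ``structural claim'' paragraph contains a genuine error.  You assert that the $\Lambda_1^+$-side is always refilled to $N(\tilde\Lambda_1^+)$ and that only the $\Lambda_1^-$-side varies with $s$.  That is not what the coordinate change gives.  Applying $\gamma^{-1}$ to the slope-$s$ curve $(s{+}1{-}n)\lambda_- + (n{-}s)\mu_-$ yields $s\lambda_+ - (s{+}1)\mu_+$, so the $\Lambda_1^+$-side replacement torus has meridional slope $-s/(s{+}1)$ --- it is $N(\tilde\Lambda_1^+)$ only when $s=0$.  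The correct picture is symmetric: as $s$ goes from $0$ to $n{-}1$, the $\Lambda_1^-$-side slope moves from $-(n{-}1)/n$ to $0$ (peeling off the surgeries $\Lambda_2,\ldots,\Lambda_{s+1}$), while the $\Lambda_1^+$-side slope moves from $0$ to $-(n{-}1)/n$ (adding the surgeries $\Lambda_1,\ldots,\Lambda_s$).  Both sides change in tandem, and it is precisely this simultaneous shift that produces the surgery link $L\setminus\Lambda_{s+1}$.  Because your structural claim keeps the $+$-side fixed, it cannot account for the surgeries along $\Lambda_1,\ldots,\Lambda_{k-1}$ that are present in $(M_k,\xi_k)$.

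A second issue: you say the refilling ``reconstitutes the component $\Lambda_k^+\#\Lambda_k^-\simeq\Lambda_k$.''  It does not.  In $(M_k,\xi_k)$ the two knots $\Lambda_k^+$ and $\Lambda_k^-$ remain disjoint; the split manifold carries the two-component link $\Lambda_k^+\sqcup\Lambda_k^-$, not the connected sum.  Splitting $(Y,\zeta)$ along $T$ replaces $T$ with two disjoint solid tori, so there is no mechanism for the arc of $\Lambda_k$ to be ``reattached'' --- that reattachment is exactly what the round $1$-handle (which you have just removed) provides.  Finally, you correctly identify the ``main obstacle'' as the bookkeeping that matches the slope-$s$ refilling to the meridian formula, but you defer it rather than resolve it.  That bookkeeping is the content of the proposition: one computes the two replacement slopes $-s/(s{+}1)$ and $-(n{-}1{-}s)/(n{-}s)$ and checks, by rerunning the iterated meridian computation of \cref{prop:mixed-belt-torus} on each side of the chain, that these are precisely the slopes of $N(\Lambda_{s+1}^+)$ and $N(\Lambda_{s+1}^-)$ inside $(M_{s+1},\xi_{s+1})$.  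With the symmetric two-sided picture in place, that verification goes through; as written, your argument would not.
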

\begin{proof}
Let us consider the neighborhood $T^2\times[-1,1]$ of the belt torus $T=T^2\times\{0\}$ constructed in the proof of \cref{prop:mixed-belt-torus}, so that
\[
s_{-1} = -1,
\quad
s_0 = \infty,
\quad\text{and}\quad
s_1 = n.
\]
Recall that we split $(Y,\zeta)$ along $T$ with integral slope $0\leq s\leq n-1$ as follows.  First, we remove $T$ and glue in a solid torus along each of the resulting boundary components:
\[
Y^s := S_{-1} \cup_{\psi^s_{-1}} (Y\setminus T) \cup_{\psi^s_1} S_1,
\]
where
\begin{itemize}
    \item each of $S_{-1}$ and $S_1$ is a solid torus;
    \item the gluing map $\psi_i\colon\partial S_i\to T_i$ carries the meridian of $S_i$ to a curve of slope $s$ in $T_i\subset\partial(Y\setminus T)$, where $T_i$ is the component of $Y\setminus T$ which is parallel to $\mathbb{R}^2\times\{i\}$.
\end{itemize}
With these gluing maps, there is a unique tight contact structure on $S_i$ determined by the characteristic foliation of $T_i$, and this structure is used to extend $\zeta$ on $(Y\setminus T)$ to $\zeta^s$ on $Y^s$.

We now consider the two-step process which begins with $(M_1,\xi_1)$, performs contact round surgery along $\Lambda_1^+\sqcup\Lambda_1^-$ to produce $(Y,\zeta)$, and then splits with integer slope $0\leq s\leq n-1$ along $T$ to produce $(Y^s,\zeta^s)$.  In the notation of the proof of \cref{prop:mixed-belt-torus}, the curves
\[
n\,\lambda_- - (n+1)\,\mu_-
\quad\text{and}\quad
\lambda_--\mu_-
\]
are normalized to have slopes $-1$ and $\infty$, respectively.  It follows that a curve of slope $s$ is
\[
(s+1-n)\,\lambda_- + (n-s)\,\mu_-.
\]
With $\gamma\colon\partial N(\Lambda_1^+)\to\partial N(\Lambda_1^-)$ being the gluing map defined by \Cref{eq:gluing-map}, we have
\[
\gamma^{-1}((s+1-n)\,\lambda_- + (n-s)\,\mu_-) = s\,\lambda_+ - (s+1)\,\mu_+.
\]
As a result, the effect of our two-step process on $(M_1,\xi_1)$ is to
\begin{itemize}
    \item remove the solid torus $N(\Lambda_1^+)$, which has meridional slope $0$, and replace it with a solid torus whose meridional slope is $-\tfrac{s}{s+1}$;
    \item remove the solid torus $N(\Lambda_-^0)$, which has meridional slope $-\tfrac{n-1}{n}$, and replace it with a solid torus whose meridional slope is $-\tfrac{n-1-s}{n-s}$.
\end{itemize}
Notice that this process has no effect when $s=0$.  For $0\leq s\leq n-1$, one may repeat the analysis found in the proof of \Cref{prop:mixed-belt-torus} to determine that the solid tori just described are neighborhoods of $\Lambda_{s+1}^+$ and $\Lambda_{s+1}^-$, respectively, in $(M_{s+1},\xi_{s+1})$.
\end{proof}

\section{Recovering some known filling classifications}\label{sec:known-results-proofs}

In this section we obtain \Cref{thm:lens-space-fillings} and \Cref{thm:torus-bundle-vague} as consequences of \Cref{thm:fillings-of-round-surgery}.  As described in \Cref{sec:intro}, neither of these results is new, but we present new proofs here to demonstrate that \Cref{thm:fillings-of-round-surgery} allows us to apply the symplectic JSJ decomposition without a basic slice description of the contact manifold under consideration. 

\subsection{Virtually overtwisted lens spaces}\label{subsec:known-results:lens-spaces}

Let us begin by recalling the classification obtained by Giroux \cite{giroux2000structures} and Honda \cite{honda2000classification} of tight contact structures on lens spaces.  Given coprime integers $p>q\geq 1$, there is a unique sequence of integers $a_1,\ldots,a_n\geq 2$ such that
\[
-\dfrac{p}{q} = [-a_1,-a_2,\ldots,-a_n] := -a_1-\cfrac{1}{-a_2-\cfrac{1}{\ddots-\cfrac{1}{-a_n}}}.
\]
Given such a sequence of integers, a surgery diagram for $L(p,q)$ is given by a chain $C_{p,q}$ of unknots in which the $k^{\mathrm{th}}$ unknot $U_k$ has framing $-a_k$.  We obtain a Legendrian realization of $C_{p,q}$ by ensuring that each unknot in the chain is a Legendrian unknot with Thurston-Bennequiun number corresponding to its framing; that is, $\mathrm{tb}(U_k)=1-a_k$.  There are $a_k-1$ distinct Legendrian unknots with Thurston-Bennequiun number $1-a_k$, up to isotopy, and thus
\[
(a_1-1)(a_2-1)\cdots(a_n-1)
\]
distinct Legendrian realizations of $C_{p,q}$, up to Legendrian isotopy.  Legendrian surgery on $(S^3,\xi_{\mathrm{std}})$ along each of these yields a tight contact structure on $L(p,q)$, and work of Giroux \cite{giroux2000structures} and Honda \cite{honda2000classification} tells us that these structures are distinct up to isotopy, and that all tight contact structures on $L(p,q)$ are constructed in this manner.

A contact structure on $L(p,q)$ obtained as a quotient of $(S^3,\xi_{\mathrm{std}})$ by the usual action of $\mathbb{Z}/p\mathbb{Z}$ is \emph{universally tight}, since it admits a tight universal cover, while a contact structure $\xi_{\mathrm{vot}}$ on $L(p,q)$ which admits an overtwisted finite cover $(M,\xi_{\mathrm{ot}})\to(L(p,q),\xi_{\mathrm{vot}})$ is called \emph{virtually overtwisted}.  In terms of the classification of tight contact structures described above, we obtain a universally tight structure on $L(p,q)$ by taking each unknot $U_k$ in $C_{p,q}$ to have rotation number $a_k-2$ and obtain another by taking each $U_k$ to have rotation number $2-a_k$.  (These structures are contactomorphic, but are only isotopic if $a_k=2$ for all $1\leq k\leq n$.)  Diagrammatically, these are the diagrams in which all stabilizations --- regardless of component in $C_{p,q}$ --- are on the left, or all stabilizations are on the right.  All other tight contact structures on $L(p,q)$ are virtually overtwisted.

In the case of universally tight contact structures, Lisca \cite{lisca2008symplectic} gives a complete classification of the exact/weak symplectic fillings of $(L(p,q),\xi_{\mathrm{ut}})$.  We will not review the details of this classification here, but its existence means that the following result allows for the classification of exact/weak symplectic fillings for any contact structure on $L(p,q)$.

\begin{prop}\label{prop:inconsistent-subchain}
If the Legendrian surgery link $C_{p,q}$ for a particular tight contact structure $\zeta$ on $L(p,q)$ contains an \emph{inconsistent subchain} $C\subseteq C_{p,q}$, then every exact/weak symplectic filling of $(L(p,q),\zeta)$ is obtained by attaching a symplectic round 1-handle to an exact/weak symplectic filling of a disjoint union of lens spaces whose surgery link results from \emph{breaking} $C$.
\end{prop}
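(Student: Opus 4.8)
The plan is to realize $(L(p,q),\zeta)$ as a Legendrian surgery on a disjoint union of contact manifolds via a connected-sum trick, so that \cref{thm:fillings-of-round-surgery} applies directly. Suppose $C\subseteq C_{p,q}$ is an inconsistent subchain, say consisting of consecutive unknots $U_i,U_{i+1},\ldots,U_j$ in the chain $C_{p,q}$. I would first locate, within the sub-link $C$, a single component $U_\ell$ that carries a stabilization contributing to the inconsistency, and observe that $C$ meets the rest of $C_{p,q}$ in at most two places (at $U_{i-1}$ and $U_{j+1}$, if present). The key move is to insert a standard convex sphere $S$ into $(S^3,\xi_{\mathrm{std}})$ — equivalently, write $(S^3,\xi_{\mathrm{std}})\cong(S^3\#S^3,\xi_{\mathrm{std}}\#\xi_{\mathrm{std}})$ — positioned so that the belt sphere $S$ is crossed exactly twice, algebraically cancelling, by $U_\ell$, and so that $C$ is entirely on one side of $S$ while $C_{p,q}\setminus C$ is on the other. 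This is possible precisely because $C$ is a subchain: it hangs off the rest of $C_{p,q}$ along a single arc (or is disconnected from it), so a sphere can be slipped between them with an arc of $U_\ell$ pushed across.

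Having arranged this, $(L(p,q),\zeta)$ is obtained from $(\tilde M,\tilde\xi) = (S^3,\xi_{\mathrm{std}})\sqcup(S^3,\xi_{\mathrm{std}})$ by surgery along a $0$-sphere and then Legendrian surgery along the link $L = C \sqcup (C_{p,q}\setminus C)$; the hypotheses of \cref{thm:fillings-of-round-surgery} are met because $C$ is an inconsistent chain in the sense defined in the introduction — conditions (1)–(4) are exactly the combinatorics that make a subchain of a tight-lens-space surgery diagram inconsistent — and $L$ meets $S$ twice along the single component $U_\ell$ with cancelling sign. Then \cref{thm:fillings-of-round-surgery} tells us every exact/weak filling of $(L(p,q),\zeta)$ is obtained by a symplectic round $1$-handle attachment to a filling of some $(M_k,\xi_k)$, where $(M_k,\xi_k)$ is Legendrian surgery on $(\tilde M,\tilde\xi)$ along $L\setminus\Lambda_k$. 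It remains to identify each such $(M_k,\xi_k)$ as a disjoint union of lens spaces whose surgery diagram is obtained by \emph{breaking} $C$: removing one component $\Lambda_k$ from the chain $C$ splits it into two (possibly empty) subchains, and the corresponding Legendrian surgery on the two $S^3$ summands produces two lens spaces — or a lens space and $S^3$, or two copies of $S^3$ — glued nowhere, i.e.\ a disjoint union. The component $C_{p,q}\setminus C$ stays on its own $S^3$ summand and contributes the ambient lens space factor.

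The main obstacle I expect is bookkeeping the splitting of $U_\ell$ into $\Lambda_k^+\sqcup\Lambda_k^-$ and checking that, after breaking $C$ at the $k$-th spot, the two resulting surgery chains are genuinely chains of max-$\tb$-or-stabilized unknots realizing lens spaces — in particular that the framings and rotation numbers inherited from $C_{p,q}$ still give a valid continued-fraction expansion on each piece, and that the self-linking convention for the case $\Lambda_1=\Lambda_n$ (when $C$ has length $2$ and we break it in the middle) matches the linking data in $C_{p,q}$. I would handle this by invoking the explicit description of $\Lambda_k^\pm$ from \cref{subsec:background:spheres}: since $\gamma_\pm$ are trivial unstabilized arcs, $\Lambda_\ell^+$ and $\Lambda_\ell^-$ are each just $U_\ell$ with its stabilizations distributed across the two arcs determined by the intersection points with $S$, and the neighboring linking numbers $\ell k(\Lambda_{k-1},\Lambda_k)$, $\ell k(\Lambda_k,\Lambda_{k+1})$ are exactly the $\pm1$'s already recorded on the edges of $C_{p,q}$. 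Thus the "broken" surgery link is literally $C_{p,q}$ with the $k$-th bead of $C$ deleted and the two loose ends pushed onto separate $S^3$ summands, which is the combinatorial operation the statement calls breaking.
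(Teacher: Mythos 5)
Your proposal follows essentially the same route as the paper: write $(S^3,\xi_{\mathrm{std}})$ as a connected sum $(S^3,\xi_{\mathrm{std}})\#(S^3,\xi_{\mathrm{std}})$, push a single component of the inconsistent subchain $C$ across the belt sphere $S$, apply \cref{thm:fillings-of-round-surgery}, and identify the resulting manifolds $(M_k,\xi_k)$ with the lens space pairs obtained by breaking $C$.

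One correction to your intermediate description, though it does not derail the argument. You cannot in general arrange for ``$C$ entirely on one side of $S$ while $C_{p,q}\setminus C$ is on the other'' with only $U_\ell$ crossing $S$. When $C=U_i\sqcup\cdots\sqcup U_j$ is an interior subchain, $C_{p,q}\setminus C$ consists of two chains linking $C$ at both ends, and a sphere separating $C$ from $C_{p,q}\setminus C$ would have to intersect both Hopf links $U_{i-1}\cup U_i$ and $U_j\cup U_{j+1}$; the hypotheses of \cref{thm:fillings-of-round-surgery} permit $S$ to meet only the one component $U_\ell$. The correct positioning --- which you do state, in contradiction to the earlier sentence, in your final paragraph --- is that $S$ splits the \emph{whole} chain $C_{p,q}$ at the bead $U_\ell$: the components $U_1,\ldots,U_{\ell-1}$ end up on one $S^3$ summand, $U_{\ell+1},\ldots,U_n$ on the other, and $U_\ell$ alone crosses $S$. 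Consequently it is not the case that ``$C_{p,q}\setminus C$ stays on its own $S^3$ summand and contributes the ambient lens space factor''; rather, $C_{p,q}\setminus C$ is generally distributed across both summands, and each broken piece mixes components of $C$ and of $C_{p,q}\setminus C$. This is exactly what the paper's definition of breaking produces, and what your final sentence correctly describes; you should replace the earlier, incorrect picture with it.
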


Before proving \cref{prop:inconsistent-subchain}, let us establish the meaning of breaking an inconsistent subchain --- the latter bit of vocabulary borrowed from \cite{etnyre2021symplectic}.  An \emph{inconsistent subchain} is an inconsistent chain (c.f. \cref{sec:intro}) $C$ which is a subchain of $C_{p,q}$.  The result of \emph{breaking} a subchain $C$ of $C_{p,q}\subseteq (S^3,\xi_{\mathrm{std}})$ is a link in $(S^3,\xi_{\mathrm{std}})\sqcup(S^3,\xi_{\mathrm{std}})$ --- namely, we remove a single component of $C$ from $C_{p,q}$ and place the two resulting chains of unknots in distinct copies of $(S^3,\xi_{\mathrm{std}})$.  A subchain of $\ell$ components can therefore be broken in $\ell$ ways.  See \Cref{fig:break-along-subchain}.

\begin{figure}
    \centering
    \includegraphics[width=\linewidth]{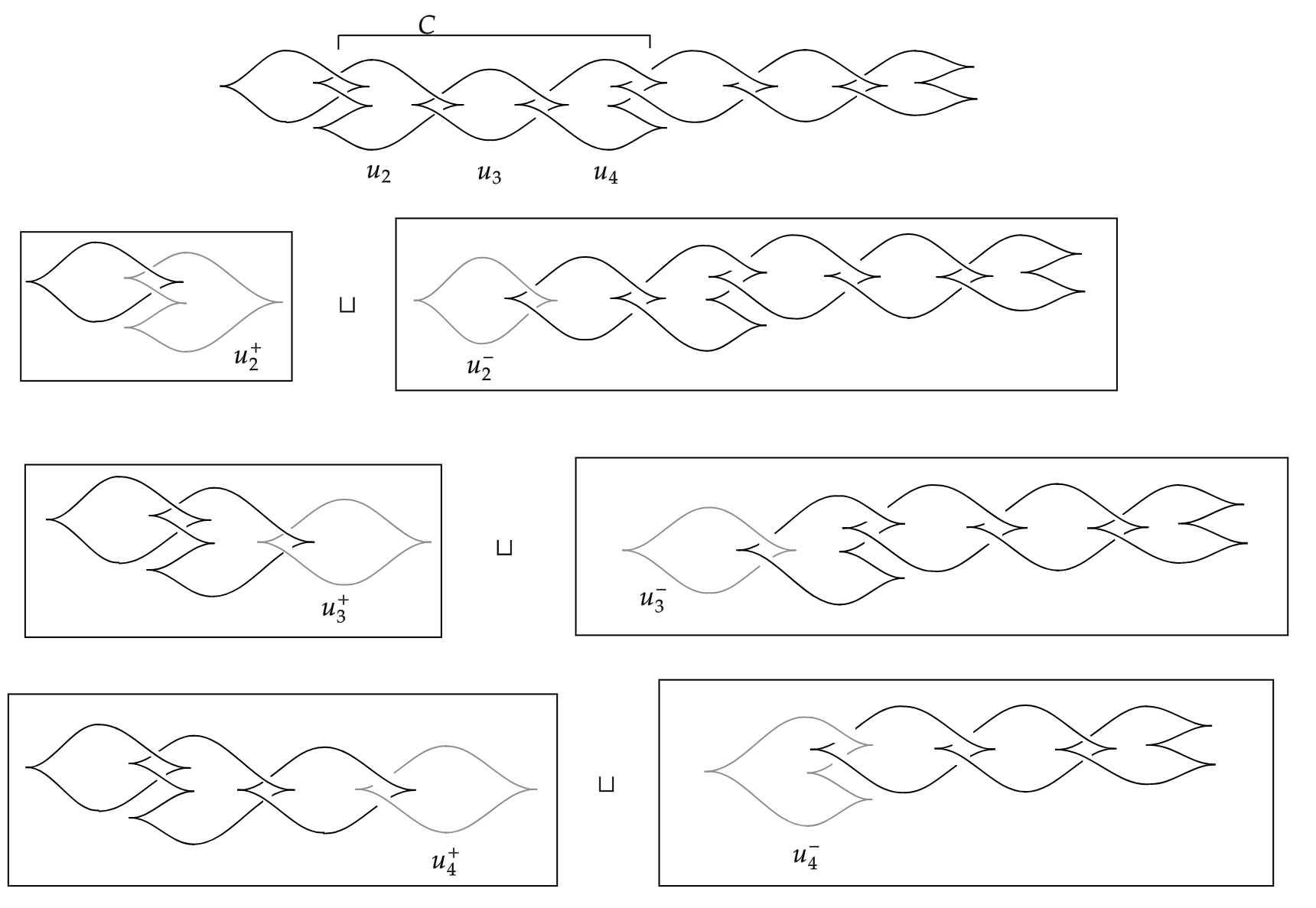}
    \caption{There are $\ell$ ways to break a subchain $C\subseteq C_{p,q}$ of length $\ell$, and each results in a link on which Legendrian surgery produces disjoint union of lens spaces.  In each case, a two-component link $\Lambda_k^+\sqcup\Lambda_k^-$ survives to this disjoint union, depicted here in grey.}
    \label{fig:break-along-subchain}
\end{figure}

\begin{proof}[Proof of \cref{prop:inconsistent-subchain}]
With $C_{p,q}=U_1\sqcup\cdots\sqcup U_n$ as in the statement of \cref{prop:inconsistent-subchain}, let us assume that the inconsistent subchain $C\subseteq C_{p,q}$ consists of the unknots $U_{k_0},\ldots,U_{k_0+\ell-1}$, so that the subchain has length $\ell$ and the stabilized components are $U_{k_0}$ and $U_{k_0+\ell-1}$.  By recognizing $(M,\xi)=(S^3,\xi_{\mathrm{std}})$ as the connected sum
\[
(S^3,\xi_{\mathrm{std}}) = (S^3,\xi_{\mathrm{std}}) \# (S^3,\xi_{\mathrm{std}})
\]
and isotoping $C_{p,q}$ so that one of the components $U_j$, $k_0\leq j\leq k_0+\ell-1$, intersects the resulting surgery sphere $S$, we place ourselves in the setting of \Cref{thm:fillings-of-round-surgery}.  The contact manifolds
\[
(M_1,\xi_1),\ldots,(M_\ell,\xi_\ell)
\]
produced by \Cref{thm:fillings-of-round-surgery} are then precisely the $\ell$ possible results of breaking $C$.
\end{proof}

\cref{thm:lens-space-fillings} follows relatively quickly from \cref{prop:inconsistent-subchain}.  Indeed, the classification of tight contact structures on lens spaces allows us to apply \cref{prop:inconsistent-subchain} to any virtually overtwisted lens space $(L(p,q),\zeta)$, and this reduces the problem of classifying the exact/weak fillings of $(L(p,q),\zeta)$ to the same problem for a collection of disjoint unions of lens spaces whose surgery links have strictly fewer components.  By a combination of results in \cite{schonenberger2005planar} and \cite{etnyre2004symplectic}, no tight contact structure on a lens space is weakly symplectically co-fillable, and thus any exact/weak symplectic filling of a disjoint union of two lens spaces is in fact a disjoint union of fillings of the two lens spaces.  So, after applying \cref{prop:inconsistent-subchain} to an inconsistent subchain $C\subseteq C_{p,q}$ of length $\ell$, we are left to classify the symplectic fillings of (at most) $2\ell$ lens spaces.  Repeated application of \cref{prop:inconsistent-subchain} then reduces the problem to the classification of exact/weak symplectic fillings of some collection of universally tight lens spaces, as described in \cref{thm:lens-space-fillings}.

\subsection{Virtually overtwisted torus bundles}
\cref{prop:inconsistent-subchain} provides an algorithm for constructing all exact/weak symplectic fillings of a virtually overtwisted lens space from the exact/weak symplectic fillings of universally tight lens spaces.  In this subsection we obtain a similar algorithm for virtually overtwisted hyperbolic torus bundles, and in this way prove \cref{thm:torus-bundle-vague}.

We begin by recalling the classification of tight contact structures on hyperbolic torus bundles, established by Honda in \cite{honda2000classification2}.  For any $A\in SL(\mathbb{Z},2)$ we obtain a torus bundle
\[
M_A := (\mathbb{R}^2/\mathbb{Z}^2\times I)/\sim,
\]
where $(\mathbf{x},1)\sim(A\mathbf{x},0)$, and we say that $M_A$ is \emph{hyperbolic} if $|\mathrm{trace}(A)|>2$.  In this case, we may write
\begin{equation} \label{eq:hyperbolic-monodromy}
A = \pm T^{-a_0}ST^{-a_1}S\cdots T^{-a_n}S,
\end{equation}
where $a_0\geq 3$, $a_1,\ldots,a_n\geq 2$,
\[
S = \left(\begin{matrix} 0 & 1 \\ -1 & 0 \end{matrix}\right),
\quad\text{and}\quad
T = \left(\begin{matrix} 1 & 1 \\ 0 & 1 \end{matrix}\right).
\]
We will refer to the monodromy $A$ as \emph{positive} or \emph{negative} according to the sign of \Cref{eq:hyperbolic-monodromy}.

\begin{figure}
    \centering
    \includegraphics[width=\linewidth]{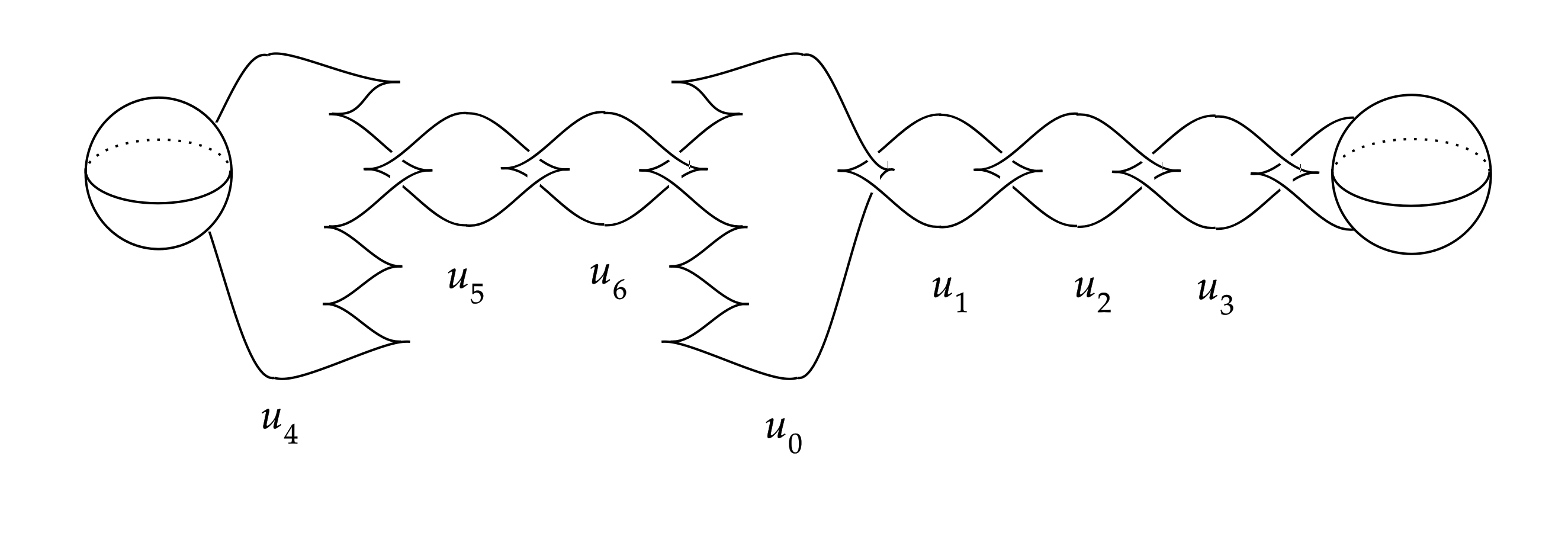}
    \caption{A torus bundle is naturally presented as the result of Legendrian surgery along a chain in $(S^1\times S^2,\xi_{\mathrm{std}})$.  A torus bundle with positive monodromy is presented here; by choosing $U_n$ to have linking number $-1$ with $U_0$ we could obtain a torus bundle with negative monodromy.}
    \label{fig:torus-bundle}
\end{figure}

Honda's classification shows that $M_A$ admits $(a_0-1)(a_1-1)\cdots(a_n-1)$ minimally twisting tight contact structures, up to isotopy, of which
\[
(a_0-1)(a_1-1)\cdots(a_n-1)-(1\pm 1)
\]
are virtually overtwisted, where the sign $\pm$ corresponds to the sign of $A$.  Each of these torus bundles can be described as the boundary of a Weinstein domain as follows.  First, we obtain $(D^3\times S^1,\omega_{\mathrm{std}})$, which has boundary $(S^2\times S^1,\xi_{\mathrm{std}})$, by attaching a Weinstein 1-handle to $(D^4,\omega_{\mathrm{std}})$.  In $(S^2\times S^1,\xi_{\mathrm{std}})$ we have a Legendrian link $L$ which consists of unknots $U_0,U_1,\ldots,U_n$ with the following properties:
\begin{enumerate}
    \item $\mathrm{tb}(U_k)=1-a_k$;
    \item $\mathrm{lk}(U_k,U_{k+1})=1$ for $0\leq k\leq n-1$;
    \item $\mathrm{lk}(U_n,U_0)=1$ if $A$ is positive and $\mathrm{lk}(U_n,U_0)=-1$ if $A$ is negative;
    \item the link $L$ passes over the 1-handle of $(D^3\times S^1,\omega_{\mathrm{std}})$, as depicted in \Cref{fig:torus-bundle}.
\end{enumerate}
These conditions lead us to $(a_0-1)(a_1-1)\cdots(a_n-1)$ isotopy types of tight contact structures on $M_A$, corresponding to the possible isotopy types of $L$.  In all but two of these isotopy types, $L$ will include both a positive stabilization and a negative stabilization; the corresponding contact structures on $M_A$ are virtually overtwisted.  In the two cases where $L$ includes stabilizations\footnote{Note that $L$ always includes at least one stabilization, since $a_0\geq 3$.} of only one sign, $M_A$ is virtually overtwisted if $A$ is negative and universally tight if $A$ is positive.

\Cref{thm:fillings-of-round-surgery} applies to all virtually overtwisted contact structures on hyperbolic torus bundles, but consider first those where, in the description given above, the link $L$ includes at least one stabilization of each sign.  As in \Cref{subsec:known-results:lens-spaces}, this means that we may identify an inconsistent subchain $C\subseteq L$, and so we have the following adaptation of \Cref{prop:inconsistent-subchain} to the present setting.

\begin{prop}\label{prop:torus-bundle-inconsistent-subchain}
If the Legendrian surgery link $L$ for a particular tight contact structure $\zeta$ on $M_A$ contains an inconsistent subchain $C\subseteq L$, then every exact/weak symplectic filling of $(M_A,\zeta)$ is obtained by attaching a symplectic round 1-handle to a connected sum of lens spaces whose surgery link results from breaking $C$.
\end{prop}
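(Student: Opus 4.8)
The plan is to follow the proof of \Cref{prop:inconsistent-subchain} nearly verbatim, with $(S^3,\xi_{\mathrm{std}})$ replaced throughout by $(S^1\times S^2,\xi_{\mathrm{std}})$. Recall that $(M_A,\zeta)$ is the result of Legendrian surgery on $(S^1\times S^2,\xi_{\mathrm{std}})$ along $L = U_0\sqcup\cdots\sqcup U_n$, where $U_0$ and $U_n$ are linked so that $L$ closes up into a cycle, and that the inconsistent subchain $C\subseteq L$ is a block $U_{k_0},\ldots,U_{k_0+\ell-1}$ of consecutive components whose two extreme members are stabilized. First I would pass to
\[
(S^1\times S^2,\xi_{\mathrm{std}}) \;\cong\; (S^1\times S^2,\xi_{\mathrm{std}})\,\#\,(S^3,\xi_{\mathrm{std}}),
\]
realized diagrammatically by attaching a Weinstein $1$-handle joining the torus bundle diagram to an empty diagram, and then isotope $L$ so that one component of $C$ runs once over this new handle, meeting its belt sphere $S$ in exactly two algebraically cancelling points, with the rest of $L$ disjoint from $S$. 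Writing $(M,\xi)$ for the result of Legendrian surgery on $(S^1\times S^2,\xi_{\mathrm{std}})$ along $L\setminus C$, this places $C\subset(M,\xi)$ in the setting of \Cref{thm:fillings-of-round-surgery}: an inconsistent chain crossing the belt sphere of a $0$-surgery exactly as required.

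Applying \Cref{thm:fillings-of-round-surgery} then produces contact manifolds $(M_1,\xi_1),\ldots,(M_\ell,\xi_\ell)$ and two-component links $\Lambda_k^+\sqcup\Lambda_k^-\subset(M_k,\xi_k)$ so that every exact/weak filling of $(M_A,\zeta)$ arises by attaching a symplectic round $1$-handle along some $\Lambda_k^+\sqcup\Lambda_k^-$ to an exact/weak filling of $(M_k,\xi_k)$; here $(M_k,\xi_k)$ is the disjoint union of $(S^3,\xi_{\mathrm{std}})$ with the result of Legendrian surgery on $(S^1\times S^2,\xi_{\mathrm{std}})$ along the link obtained from $L$ by deleting $\Lambda_k$ and reinterpreting via the canonical splitting of \Cref{subsec:background:spheres}. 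It then remains to match this with the result of \emph{breaking} $C$: deleting one component from the cyclic chain $L$ leaves a single linear chain of negatively-framed unknots, which one recognizes as a surgery diagram for a connected sum of lens spaces, and since $(S^3,\xi_{\mathrm{std}})$ is not weakly symplectically co-fillable (c.f. \cite{gromov1985pseudo,Eli01}), the $(S^3,\xi_{\mathrm{std}})$ summand of $(M_k,\xi_k)$ can contribute only a Darboux ball to a filling. Thus fillings of $(M_k,\xi_k)$ are precisely fillings of this connected sum of lens spaces, and the proposition follows from \Cref{thm:fillings-of-round-surgery}.

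The step I expect to require the most care is the diagrammatic bookkeeping that makes ``breaking $C$'' precise in this cyclic setting and matches it to the conclusion of \Cref{thm:fillings-of-round-surgery}. Concretely: one must arrange the belt sphere so that \emph{exactly one} component of $C$, and no component of $L\setminus C$, crosses it; track how the canonical splitting of $\Lambda_k$ along $S$ reglues the two arcs of $C\setminus\Lambda_k$ to $L\setminus C$ and produces the surviving grey link $\Lambda_k^+\sqcup\Lambda_k^-$; and verify that the resulting surgery diagram genuinely presents a connected sum of lens spaces — in particular, that the cycle over the Weinstein $1$-handle of the torus bundle construction unlinks appropriately once a component is removed. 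These checks run parallel to those underlying \Cref{prop:inconsistent-subchain}, and once they are in place the proposition is immediate.
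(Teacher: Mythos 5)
Your proposal takes a genuinely different route from the paper's, and the difference creates a real gap. The torus bundle's Stein diagram already contains a Weinstein $1$\nb-handle — the one attached to $(D^4,\omega_{\mathrm{std}})$ to produce $(D^3\times S^1,\omega_{\mathrm{std}})$ — and the paper takes the belt sphere of \emph{that existing handle} to be the surgery sphere $S$ in \Cref{thm:fillings-of-round-surgery}, isotoping $L$ so that exactly one component of $C$ crosses it twice with cancelling signs. Splitting along that $S$ removes the $1$-handle, so $(\tilde M,\tilde\xi)=(S^3,\xi_{\mathrm{std}})$ and $(M_k,\xi_k)$ is Legendrian surgery on $(S^3,\xi_{\mathrm{std}})$ along the linear chain $L\setminus\Lambda_k$, which genuinely is a lens space (or connected sum thereof).

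You instead adjoin a \emph{new} $1$-handle via $(S^1\times S^2,\xi_{\mathrm{std}})\cong(S^1\times S^2,\xi_{\mathrm{std}})\#(S^3,\xi_{\mathrm{std}})$ and push one component of $C$ over the new belt sphere. After applying \Cref{thm:fillings-of-round-surgery} with this $S$, the original $1$-handle is never cut: your $(M_k,\xi_k)$ is $\bigl[\text{Legendrian surgery on }(S^1\times S^2,\xi_{\mathrm{std}})\text{ along }L\setminus\Lambda_k\bigr]\sqcup(S^3,\xi_{\mathrm{std}})$. This first factor is not a lens space. Each $U_i$ is nullhomologous in $S^1\times S^2$ (it crosses the original belt sphere algebraically zero times, as it must for $H_1(M_A)$ to retain its $\mathbb{Z}$ summand), so the surgery along the linear chain $L\setminus\Lambda_k$ cannot kill the class of the original belt sphere; the result has $b_1=1$ and is a reducible manifold $L(p,q)\#(S^1\times S^2)$, not a connected sum of lens spaces. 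The step where you hope that ``the cycle over the Weinstein $1$-handle of the torus bundle construction unlinks appropriately once a component is removed'' is exactly where this fails: removing $\Lambda_k$ does leave a linear chain, and one can slide it off the original belt sphere, but doing so does not delete the $1$-handle from the diagram — it only shows the chain sits in a ball, making the $S^1\times S^2$ summand manifest rather than eliminating it. To repair the argument you should not introduce a new handle at all; use the belt sphere that is already present in the torus bundle diagram, exactly as the paper does in the paragraph following the proposition and in \Cref{fig:broken-torus-bundle}.
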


Breaking an inconsistent subchain in this setting has a slightly different meaning than in the lens space setting.  If $C\subseteq L\subseteq (S^2\times S^1,\xi_{\mathrm{std}})$ is the inconsistent subchain hypothesized by \Cref{prop:torus-bundle-inconsistent-subchain}, then we break $C$ by arranging for one of its components to be the unique component of $L$ to pass over the 1-handle of $(S^2\times S^1,\xi_{\mathrm{std}})$ and then remove both this component and the 1-handle from the diagram.  The result is a chain of unknots in $(S^3,\xi_{\mathrm{std}})$, and thus a Legendrian surgery diagram for a lens space.  See \Cref{fig:broken-torus-bundle}.  By varying which component of $C$ is arranged to pass over the 1-handle, we find $\ell$ ways of breaking $C$.

\begin{figure}
    \centering
    \includegraphics[width=\linewidth]{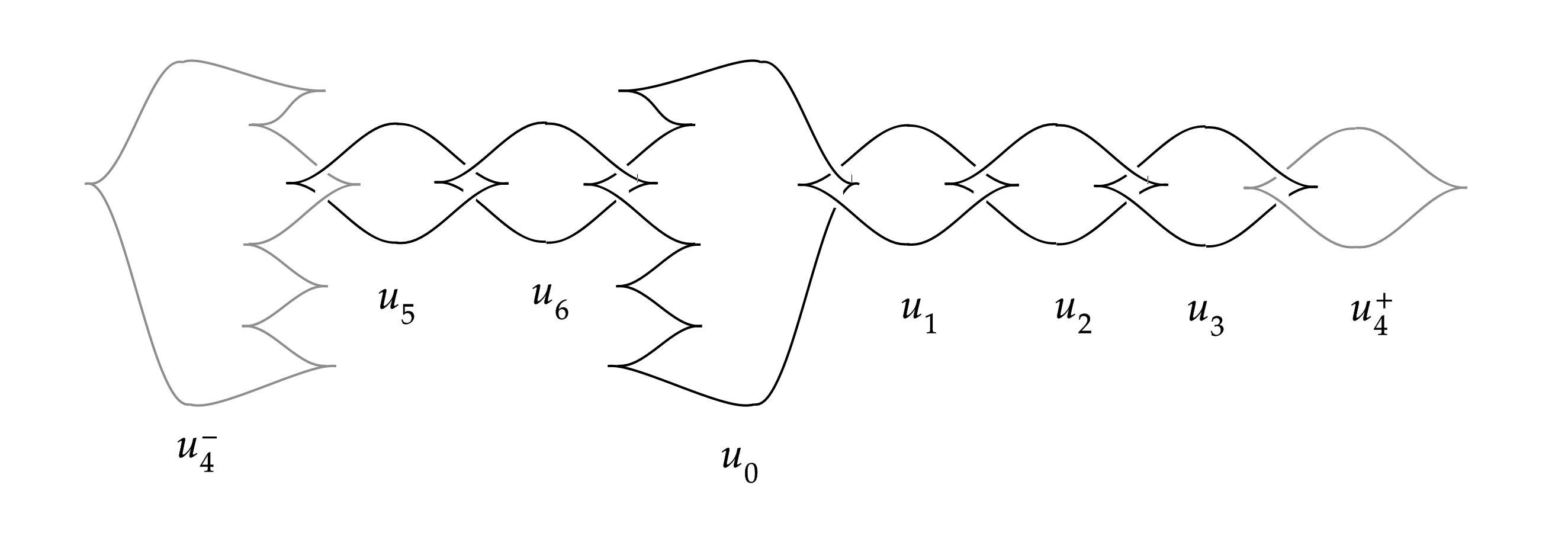}
    \caption{Breaking an inconsistent subchain in a torus bundle produces a lens space which contains a distinguished link $\Lambda_k^+\sqcup\Lambda_k^-$.  Here we present the result of breaking at $U_4$ in the torus bundle depicted in \Cref{fig:torus-bundle}.  Depending on which inconsistent subchain is used, there are either 3 or 4 other breakings which could have been performed.}
    \label{fig:broken-torus-bundle}
\end{figure}

\Cref{prop:torus-bundle-inconsistent-subchain} establishes \Cref{thm:torus-bundle-vague} for all virtually overtwisted hyperbolic torus bundles with positive monodromy, and for all but two such structures in the case of negative monodromy.  It remains to consider the case where the monodromy $A$ is negative and all stabilizations of the Legendrian link $L$ described above are of a single sign.

\begin{prop}
Let $L$ be the surgery link described above for a virtually overtwisted torus bundle $(M_A,\zeta)$, where
\[
A = - T^{-a_0}ST^{-a_1}S\cdots T^{-a_n}S,
\]
for some integers $a_0\geq 3$ and $a_1,\ldots,a_n\geq 2$, and let $m=\max\{k\,|\,a_k\geq 3\}$.  If all stabilizations of $L$ are of the same sign, then every exact/weak filling of $(M_A,\zeta)$ is obtained by attaching a symplectic round 1-handle to an exact/weak symplectic filling of $(L(p,q),\xi_{\mathrm{ut}})$, where
\[
-\frac{p}{q}=[-a_{m+1},-a_{m+2},\ldots,-a_n,-a_0,-a_1,\ldots,-a_{m-1}]
\quad\text{or}\quad
-\frac{p}{q}=[-a_1,-a_1,\ldots,-a_{n}]
\]
and $\xi_{\mathrm{ut}}$ is the unique (up to contactomorphism) universally tight contact structure on $L(p,q)$.
\end{prop}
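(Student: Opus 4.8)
The plan is to exhibit an inconsistent subchain inside $L$, apply \Cref{prop:torus-bundle-inconsistent-subchain}, and then identify the broken lens spaces. By the definition of $m$ the integers $a_{m+1},\dots,a_n$ all equal $2$, so $U_{m+1},\dots,U_n$ are max-tb unknots, serving as Legendrian meridians, whereas $a_0,a_m\ge 3$ force $U_0$ and $U_m$ to be stabilized --- necessarily with the single common sign $\sigma$ of all stabilizations of $L$. I claim $C:=U_m\sqcup U_{m+1}\sqcup\dots\sqcup U_n\sqcup U_0$ is an inconsistent subchain of $L$: conditions (1)--(3) of \Cref{sec:intro} are immediate, and condition (4) holds because $\mathrm{lk}(U_m,U_{m+1})\cdots\mathrm{lk}(U_{n-1},U_n)=1$ while $\mathrm{lk}(U_n,U_0)=-1$ since $A$ is negative, so the signed product is $\sigma\cdot(-1)\cdot\sigma=-1$. (When $m=0$ one instead takes $C=L$, with $U_0$ playing the role of both end components and the self-linking convention of \Cref{sec:intro} in force; when $m=n$ the subchain $C=U_n\sqcup U_0$ already has length two and the intermediate case treated below does not arise.)

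With this subchain in hand, \Cref{prop:torus-bundle-inconsistent-subchain} tells us that every exact/weak symplectic filling of $(M_A,\zeta)$ is obtained by attaching a symplectic round $1$-handle to an exact/weak filling of one of the lens spaces produced by breaking $C$ (c.f. \Cref{fig:broken-torus-bundle}). I would compute these breakings: breaking at $U_0$ deletes $U_0$ and the $1$-handle, leaving the linear Legendrian chain $U_1-\dots-U_n$ in $(S^3,\xiStd)$, a surgery diagram for $L(p,q)$ with $-p/q=[-a_1,\dots,-a_n]$ --- the second continued fraction in the statement; breaking at $U_m$ leaves $U_{m+1}-\dots-U_n-U_0-\dots-U_{m-1}$, which, being a tree, has diffeomorphism type insensitive to the $-$ decoration on the edge joining $U_n$ and $U_0$, and is a surgery diagram for $L(p,q)$ with $-p/q=[-a_{m+1},\dots,-a_n,-a_0,\dots,-a_{m-1}]$ --- the first continued fraction.

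Next I would check that every lens space obtained by breaking $C$ carries the universally tight contact structure by verifying the consistency criterion of \Cref{subsec:intro:plumbed}. All secondary weights are extreme: the stabilized vertices have rotation numbers of the single sign $\sigma$, and the max-tb vertices have rotation number $0=\pm(b+2)$. Moreover the unique edge carrying a $-$ sign joins $U_n$ to $U_0$, and every vertex on its $U_{m+1},\dots,U_n$ side is a max-tb vertex of vanishing rotation number; hence any path crossing that edge contributes a zero factor to the consistency product, while any path avoiding it carries only $+$ signs and two rotation numbers of the common sign $\sigma$. Thus each broken lens space is universally tight, and in particular the two $L(p,q)$ above carry the (contactomorphically unique) universally tight contact structure $\xi_{\mathrm{ut}}$.

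The hard part will be showing that the fillings of $(M_A,\zeta)$ produced by breaking at an intermediate component $U_j$, $m<j\le n$, are already captured by the $U_0$- or $U_m$-breaking; this cannot be read off from continued fractions, since the intermediate broken chain $[-a_{j+1},\dots,-a_n,-a_0,\dots,-a_{j-1}]$ need not even be diffeomorphic to a principal one (a stray $-2$ may migrate into the stabilized block). I would instead argue at the level of fillings: breaking at $U_j$ leaves behind a surviving link $\Lambda_j^+\sqcup\Lambda_j^-$ consisting of two max-tb Legendrian meridians of the two endpoints of the broken chain, and --- imitating the coordinate bookkeeping of \Cref{prop:splitting-along-belt-torus} --- I would show that a symplectic round $1$-handle attached along such a pair, to any exact/weak filling of the universally tight broken lens space, factors through a round $1$-handle attachment to a filling of the lens space obtained by ``sliding the breaking point one step toward $U_m$'' (to $U_{j-1}$ if $j-1>m$, to $U_m$ if $j=m+1$). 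Iterating then recovers every filling from the $U_m$-breaking, and together with the $U_0$-breaking this produces the two options in the statement. Making this sliding precise --- in particular tracking the distinguished link and the attaching $2$-handle $\Lambda_j^+\#\Lambda_j^-$ across the continued-fraction Kirby moves relating consecutive broken diagrams --- is where the substantive work lies.
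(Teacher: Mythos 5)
Your proposal follows the same route as the paper's own proof: identify the inconsistent subchain $C = U_m\sqcup U_{m+1}\sqcup\cdots\sqcup U_n\sqcup U_0$, apply \Cref{prop:torus-bundle-inconsistent-subchain}, and compute the breakings at the two stabilized components $U_m$ and $U_0$.  You additionally spell out the consistency check that the paper leaves implicit, and you handle the boundary cases $m=0$ and $m=n$ explicitly.

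The issue you flag as ``the hard part'' --- the breakings at the intermediate, unstabilized components $U_j$ with $m<j\le n$ --- is a genuine gap, and it is one that the paper's own proof does not address either.  The paper asserts that ``Because the unknots $U_{m+1},\ldots,U_n$ are unstabilized, \Cref{thm:fillings-of-round-surgery} provides two ways of breaking this inconsistent chain, splitting either $U_m$ or $U_0$,'' but no such sharpening of \Cref{thm:fillings-of-round-surgery} is proved anywhere in the paper: as established in \Cref{sec:proof}, the theorem only guarantees that each filling of $(Y,\zeta)$ arises from \emph{some} breaking, and a priori any of the $n-m+2$ slopes may occur.  The intermediate breakings leave linear chains $[-a_{j+1},\ldots,-a_n,-a_0,\ldots,-a_{j-1}]$ whose underlying lens spaces are generically distinct from both options in the statement (for instance $n=3$, $a_0=4$, $a_1=3$, $a_2=a_3=2$ gives $m=1$, and the four breakings yield $L(7,3)$, $L(10,7)$, $L(19,11)$, $L(18,5)$, only the first two of which appear in the proposition).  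The figure the paper cites as containing the proof depicts only the $m=n$ case, in which the chain has exactly two components and the issue does not arise.  Your proposed ``sliding'' strategy --- showing that any round-handle attachment arising from an intermediate breaking is already accounted for by the $U_0$- or $U_m$-breaking --- is the right sort of argument to close this gap and is presumably the content one would need to import from \cite{christian2021symplectic}, but since you do not carry it out, your proposal is incomplete at exactly the point where the paper's proof is also unjustified.  Note that the gap concerns the \emph{proof} rather than the truth of the statement: it may well be that the intermediate breakings produce no new fillings, but that claim is nontrivial and needs an argument.
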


\begin{figure}
    \centering
    \includegraphics[width=0.6\linewidth]{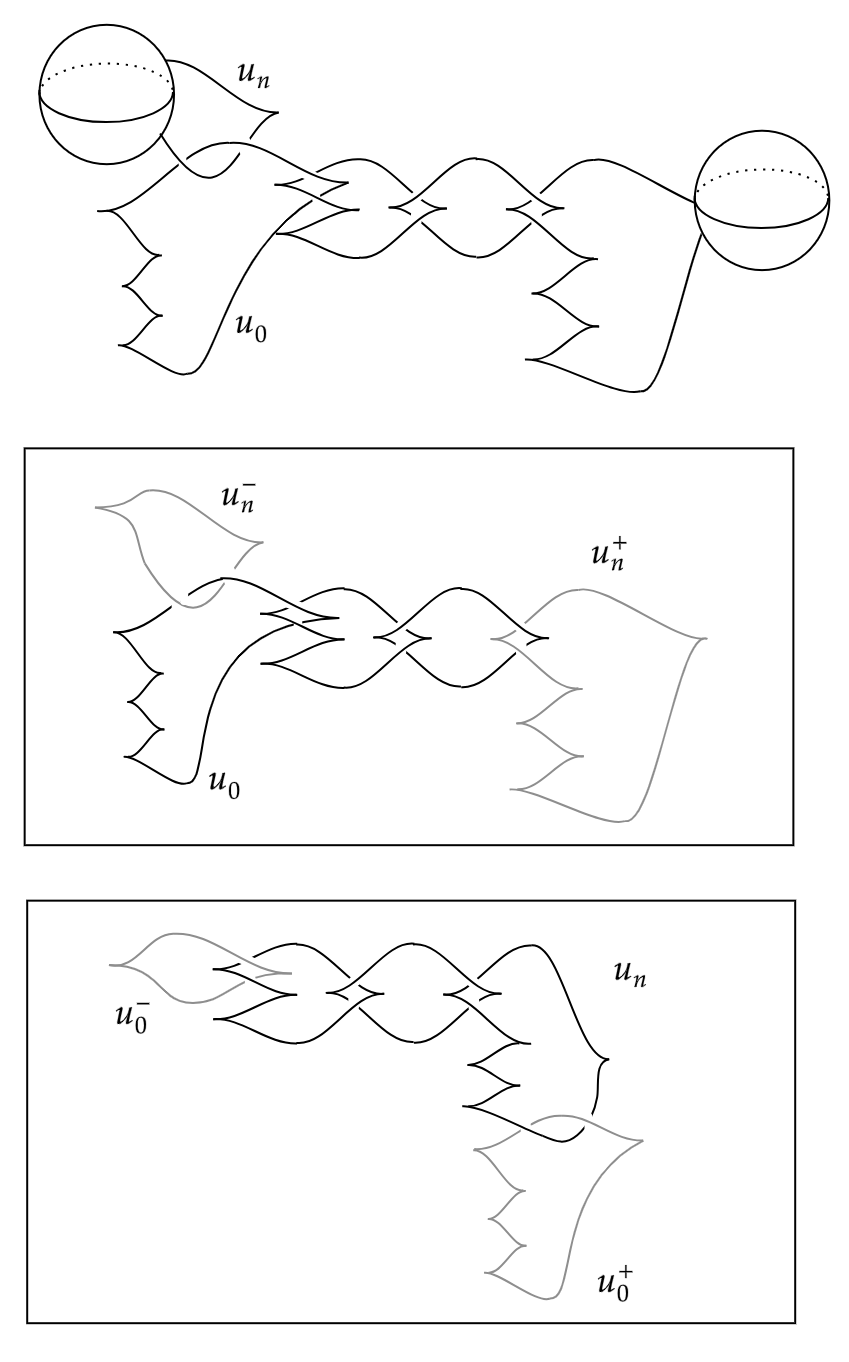}
    \caption{Negative hyperbolic torus bundles admit virtually overtwisted tight contact structures whose Legendrian surgery diagrams include stabilizations of only one sign.  In this case, the linking number $\mathrm{lk}(U_n,U_0)=-1$ provides an inconsistent chain on which \Cref{thm:fillings-of-round-surgery} may be applied.}
    \label{fig:negative-torus-bundle}
\end{figure}

\begin{proof}
The proof is contained in \Cref{fig:negative-torus-bundle}.  Specifically, the unknot $U_0$ is stabilized at least once and has linking number $-1$ with $U_n$.  With $m$ as in the statement of the proposition, the link $U_m\sqcup U_{m+1}\sqcup\cdots\sqcup U_n\sqcup U_0$ forms an inconsistent chain as hypothesized by \Cref{thm:fillings-of-round-surgery}.  Because the unknots $U_{m+1},\ldots,U_n$ are unstabilized, \Cref{thm:fillings-of-round-surgery} provides two ways of breaking this inconsistent chain, splitting either $U_m$ or $U_0$; the resulting diagrams produce the two lens spaces described by the proposition.  See \Cref{fig:negative-torus-bundle} for the case where $m=n$. 
\end{proof}

\section{Symplectic fillings of plumbed 3-manifolds}\label{sec:plumbed}

In this section we prove \Cref{thm:plumbed-3-manifolds} by applying \Cref{thm:fillings-of-round-surgery} to a fully-decorated good plumbing graph $\Gamma$ which is inconsistent.  Namely, our application of \Cref{thm:fillings-of-round-surgery} identifies maximal subgraphs $\Gamma_1,\ldots,\Gamma_k$, each obtained by deleting a single vertex from $\Gamma$, such that every exact/weak filling of $(Y,\zeta)$ results from attaching a symplectic round 1-handle to an exact/weak filling of $(Y_j,\zeta_j)$, for some $1\leq j\leq k$.  \Cref{thm:plumbed-3-manifolds} then follows by repeated application of this reduction.

\subsection{Wrapped-up graphs and Stein diagrams}
Crucial to our ability to apply \Cref{thm:fillings-of-round-surgery} will be the presentation of $(Y,\zeta)$ as the boundary of a Stein handlebody.  For this we appeal to the techniques of \cite[Section 5]{shah2024tight}, which we briefly recall here.

To describe Stein fillable contact structures on plumbed $3$–manifolds, it is convenient to represent the plumbing graph in a \emph{wrapped-up form}. A wrapped-up graph is a planar reformulation\footnote{But a wrapped-up graph need not be planar as a graph.} of the plumbing diagram in which the vertices and edges are arranged so that the linking information between components corresponds directly to Legendrian link diagrams in the boundary of a handlebody. This representation provides a combinatorial bridge between the plumbing description of a $4$–manifold and its Legendrian handle decomposition.
We arrange the plumbing graph $\Gamma$ in the following \emph{wrapped-up form}:
\begin{itemize}
    \item The vertices are organized in horizontal rows.
    \item The vertices in the bottom row lie along a linear subgraph of $\Gamma$, whose endpoints are connected by a curved edge $\gamma$ below the row, forming a cycle $c$ that does not enclose any portion of the graph.
    \item The first and last vertex of each row are incident to curved edges that wrap around the edge $\gamma$.
    \item Every edge of $\Gamma$ is either horizontal, vertical, or a curved edge wrapping around $\gamma$.
    \item Every cycle in $\Gamma$ encloses the innermost cycle $c$.
\end{itemize}
For example, see \Cref{fig:wrapped-up-fully-decorated}.

Placing the graph in this wrapped-up form allows us to apply directly the techniques developed in \cite{gompf20234} to construct a handlebody diagram of the associated $4$-dimensional plumbing. In this representation, each curved edge corresponds to a $1$–handle, each vertex corresponds to a $2$–handle attaching circle, and the linking of the $2$–handles reflects the adjacency relations of the graph. Starting from a fully-decorated, good, wrapped-up graph $\Gamma$, one can construct a \emph{Stein diagram}, that is, a Legendrian surgery diagram describing a Stein handlebody whose boundary carries the desired contact structure. Each vertex $v$ of $\Gamma$ has a corresponding Legendrian unknot in this diagram, with Thurston-Bennequin number $b(v)+1$ and rotation number $r(v)$.  That is, the unknot coresponding to $v$ is obtained from the max-$\mathrm{tb}$ unknot by stabilizing $-(b(v)+2)$ times, and the signs of these stabilizations are determined by $r(v)$.  For a fixed system of primary weights on $\Gamma$ --- the weights which determine the diffeomorphism type of the resulting contact manifold $(Y,\zeta)$ --- this construction produces $\prod_{v\in\Gamma}|b(v)+1|$ Stein fillable contact structures, each determined by the secondary system of weights on $\Gamma$.

\begin{figure}
    \centering
    \includegraphics[width=0.8\linewidth]{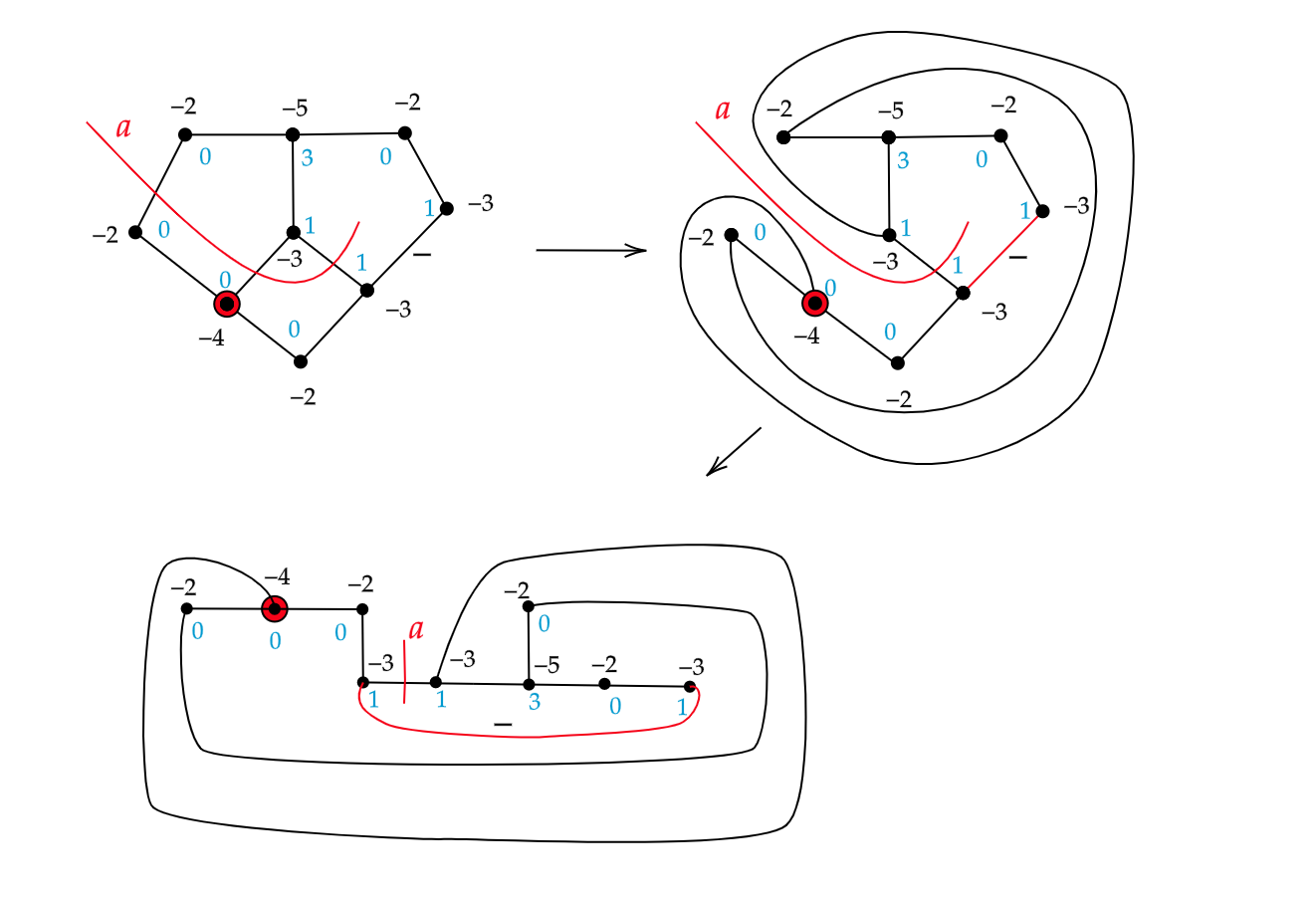}
    \caption{Wrapping up a fully-decorated plumbing graph.  Here, the path $a$ is chosen to ensure that the edge decorated with a minus sign will become round in the wrapped-up form.}
    \label{fig:wrapped-up-fully-decorated}
\end{figure}

\begin{lem}\label{lemma:wrapped-up}
Let $\Gamma$ be a fully-decorated, good plumbing graph which is either a tree or has no vertices of degree greater than 3, and let $\mathcal{C}\subset\Gamma$ be a linear chain whose vertices have degree at most 2, with the possible exception of its first and last vertices.  The contact manifold $(Y,\zeta)$ which is determined by $\Gamma$ admits a Stein handlebody diagram in which each vertex of $\Gamma$ corresponds to a Legendrian unknot and the link of unknots corresponding to vertices in $\mathcal{C}$ passes over a 1-handle.
\end{lem}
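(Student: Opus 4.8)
The plan is to make the wrapped-up form concrete and then invoke the standard Gompf-style algorithm for turning a plumbing diagram into a Stein handlebody, as formalized in \cite[Section 5]{shah2024tight}. First I would reduce to the case where $\mathcal{C}$ is exactly the linear chain used as the bottom row in the wrapped-up form: by hypothesis $\mathcal{C}$ is a linear chain all of whose vertices have degree at most $2$ (except possibly its endpoints), so the remaining vertices of $\Gamma$ hang off $\mathcal{C}$ in rows stacked above it. I would build the wrapped-up form of $\Gamma$ by placing $\mathcal{C}$ along the bottom row, adjoining the curved edge $\gamma$ below it that closes $\mathcal{C}$ into the innermost cycle $c$, and then arranging the remaining vertices in higher rows with their edges made horizontal, vertical, or wrapping around $\gamma$. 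The hypothesis that $\Gamma$ is a tree or has maximum degree $3$ is exactly what guarantees this planar arrangement exists: each vertex of $\mathcal{C}$ has at most one edge leaving the bottom row, and each higher vertex similarly has bounded valence, so the rows can be drawn without forced crossings that fail to enclose $c$. I would cite \cite[Section 5]{shah2024tight} for the fact that a good, fully-decorated graph satisfying the degree hypothesis admits such a wrapped-up form, adapting only the choice of bottom row to be the prescribed chain $\mathcal{C}$.

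Next I would run the handlebody construction: in the wrapped-up diagram each curved edge wrapping around $\gamma$ becomes a Weinstein $1$-handle, each vertex $v$ becomes a Legendrian unknot which is the attaching circle of a Weinstein $2$-handle, and the adjacency relations of $\Gamma$ become the linking/stabilization data of these Legendrian unknots. The goodness condition $b(v)+\deg(v)\le 0$ together with $b(v)\le -2$ ensures that each such unknot can be realized as a Legendrian unknot with $\mathrm{tb}(U_v)=b(v)+1$ even after accounting for the linking with its neighbors, so that the $2$-handles are Weinstein and the resulting handlebody is Stein; this is precisely the content recalled from \cite[Section 5]{shah2024tight}. The secondary weight $r(v)$ is realized as the rotation number $r(v)$ by choosing the signs of the $-(b(v)+2)$ stabilizations appropriately, so the contact structure on the boundary is the prescribed $(Y,\zeta)$.

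Finally I would read off the conclusion. Because $\mathcal{C}$ was chosen as the bottom row, the curved edge $\gamma$ closing $\mathcal{C}$ into the cycle $c$ becomes a single $1$-handle, and the Legendrian unknots corresponding to the vertices of $\mathcal{C}$ are linked in a chain that runs across exactly this $1$-handle --- that is, the subdiagram of unknots over the vertices of $\mathcal{C}$ passes over a $1$-handle, as claimed. The main obstacle is verifying that the wrapped-up form can always be produced with the bottom row equal to $\mathcal{C}$ rather than some other chain: one must check that the higher rows and their connecting edges can be routed around $\gamma$ without violating the requirement that every cycle in $\Gamma$ encloses $c$. Here the degree-$3$ (or tree) hypothesis does the work --- a vertex of $\mathcal{C}$ of degree $2$ contributes no edge leaving the bottom row, a vertex of degree $3$ contributes exactly one, and the endpoints of $\mathcal{C}$ carry the wrap-around edges --- so the routing is forced and planar. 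I would spell this out by induction on the number of rows, peeling off the top row at each stage, with the base case being a single linear chain where the statement is immediate.
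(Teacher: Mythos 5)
Your proposal identifies the right target (get $\mathcal{C}$ to pass over the distinguished $1$-handle in the wrapped-up form) but skips the mechanism that actually delivers it, and conflates two cases that the paper's proof treats separately.

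The gap: you assert that because each internal vertex of $\mathcal{C}$ has degree at most $2$, ``the routing is forced and planar'' and you can simply declare $\mathcal{C}$ to be the bottom row.  This is not automatic.  The existence of a wrapped-up form for a degree-$\le 3$ graph is established in \cite[Section 5]{shah2024tight} by a specific algorithm: one chooses a Hamiltonian path $a$ in the \emph{dual} graph of $\Gamma\subset\mathbb{R}^2$, starting in the unbounded region and visiting each face once, and then isotopes the edges crossed by $a$ through infinity; the last-crossed edge becomes the round edge $\gamma$.  Whether $\mathcal{C}$ can be arranged to pass over $\gamma$ is therefore a question about whether the Hamiltonian path can be chosen to terminate in a suitable face, not a question that the degree bound alone answers.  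The paper's proof splits into cases: if $\mathcal{C}$ lies on a cycle, the degree-$\le 2$ condition on internal vertices forces that cycle to bound a face $R_0$, and one chooses $a$ to end at $R_0$; if $\mathcal{C}$ lies on no cycle (in particular in the tree case), one instead introduces a cancelling $0$-/$1$-handle pair (i.e. writes $(S^3,\xi_{\mathrm{std}})$ as a self-connected-sum), creating an artificial $1$-handle for $\mathcal{C}$ to cross.  Your proposal treats $\gamma$ as always artificially adjoined, which handles the second case but not the first --- when $\mathcal{C}$ genuinely sits on a cycle of $\Gamma$, one of the \emph{existing} edges must become $\gamma$, and this is exactly what the choice of $a$ controls.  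The proposed ``induction on rows, peeling off the top'' would still need to reprove what the dual-graph Hamiltonian path argument already gives, and it is not clear the inductive step closes: adding a row can create new faces and new constraints on where the round edge may go.  In short, you correctly spotted that the main obstacle is realizing $\mathcal{C}$ as the bottom row, but the paper resolves it by steering the Hamiltonian path in the dual graph, a step your proposal does not supply.
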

\begin{proof}
Let us first consider the case where $\Gamma$ is a tree.  In this case, there is a standard construction (c.f. \cite[Section 4.6]{gompf20234}) of a Legendrian link $L\subset(S^3,\xi_{\mathrm{std}})$ consisting of an unknot for each vertex of $\Gamma$, with these unknots linked according to the edges of $\Gamma$.  Legendrian surgery along $L$ produces $(Y,\zeta)$, and thus $L$ can also be interpreted as a Stein diagram for $(Y,\zeta)$.  By introducing a canceling 0-/1-handle pair (or, equivalently, by writing $(S^3,\xi_{\mathrm{std}})$ as $(S^3,\xi_{\mathrm{std}})\#(S^3,\xi_{\mathrm{std}})$), we can describe $(Y,\zeta)$ as the boundary of a Stein domain built from two 0-handles, a single 1-handle, and one 2-handle for each component of $L$.  Moreover, because $\Gamma$ is a tree, we may arrange for $L$ to pass over the 1-handle in the desired manner.  This coincides with our approach to lens spaces depicted, for instance, in \Cref{fig:break-along-subchain}.

If $\Gamma$ is not a tree, then we appeal to the algorithm described in \cite[Section 5]{shah2024tight} for wrapping up graphs whose vertices all have degree no greater than 3.  This algorithm begins with a path $a$ in $\mathbb{R}^2$ which begins in the unbounded region of $\mathbb{R}^2\setminus\Gamma$ and passes through each region of $\mathbb{R}^2\setminus\Gamma$ exactly once; that is, $a$ is a Hamiltonian path in the dual graph of $\Gamma$.  We then produce a wrapped-up version of $\Gamma$ by isotoping the edges of $\Gamma$ intersected by $a$ through the point at infinity, and finally applying a planar isotopy to acheive the wrapped-up form described above.  See \cite[Lemma 17]{shah2024tight} for details.  As described above, we will obtain a Stein diagram for $(Y,\zeta)$ from this wrapped-up form of $\Gamma$.

It remains to verify that the link of unknots corresponding to $\mathcal{C}$ will pass over a 1-handle in the resulting Stein diagram.  First, if $\mathcal{C}$ is not contained in any cycle, then we may proceed as in the case of a tree, artificially introducing a canceling 0-/1-handle pair.  On the other hand, if $\mathcal{C}$ is contained in a cycle, then $\mathcal{C}$ must be contained in a cycle which forms the boundary of a region $R_0$ in $\mathbb{R}^2\setminus\Gamma$.  This follows from our requirement that the only vertices of $\mathcal{C}$ which may have degree greater than 3 are its extreme vertices.  By choosing the path $a$ to terminate at $R_0$, we may use the algorithm of \cite[Section 5]{shah2024tight} to produce a wrapped-up form for $\Gamma$ in which $\mathcal{C}$ passes over a curved edge, and thus the link produced from $\mathcal{C}$ passes over a 1-handle in the resulting Stein diagram.  See \Cref{fig:wrapped-up-fully-decorated} for an example of this algorithm.
\end{proof}

\subsection{The proof of \Cref{thm:plumbed-3-manifolds}}
We are now ready to prove \Cref{thm:plumbed-3-manifolds}.  Note that there are two manners in which the fully-decorated good plumbing graph $\Gamma$ may fail to be consistent.  The first is that some vertex $v_1$ in $\Gamma$ has a secondary weight $r(v_1)$ which fails to be extreme, as is the case in the upper left graph of \Cref{fig:fully-decorated-plumbing}.  In this case, \Cref{lemma:wrapped-up} provides a Stein handlebody diagram for $(Y,\zeta)$ which includes an unknot $U_1$ which has been stabilized at least once with each sign.  We may therefore define $(M,\xi)$ to be the contact manifold obtained using the same Stein handlebody diagram, but with $U_1$ deleted.  That is, $(M,\xi)$ results from the fully-decorated good plumbing graph obtained by deleting $v_1$ from $\Gamma$.  Because $U_1$ may be treated as a Legendrian knot in $(M,\xi)$, $(Y,\zeta)$ satisfies the hypotheses of \Cref{thm:mixed-stabilizations}, and we see that every exact/weak symplectic filling of $(Y,\zeta)$ results from an exact/weak symplectic filling of $(M,\xi)$ by attaching a symplectic round 1-handle.

On the other hand, suppose all secondary weights of $\Gamma$ are extreme.  Then $\Gamma$ is inconsistent if and only if there is a path $e_1,\ldots,e_m$ in $\Gamma$, with corresponding vertices $v_1,\ldots,v_m,v_{m+1}$, such that
\[
r(v_1)\cdot\sigma(e_1)\cdots\sigma(e_m)\cdot r(v_{m+1}) < 0.
\]
Let us call such paths in $\Gamma$ \emph{inconsistent paths} and assume that $e_1,\ldots,e_m$ is a minimal element in the poset of inconsistent paths, partially ordered by inclusion.  Notice that minimality ensures that $e_1,\ldots,e_m$ is a linear chain.

We now claim that each vertex $v_j$, $2\leq j\leq m$, has degree 2.  Indeed, because the secondary weights of $\Gamma$ are extreme, we have $r(v_j)=\pm(b(v_j)+2)$.  If $r(v_j)$ is nonzero for some $2\leq j\leq m$, then we may multiply the above inequality by $r(v_j)^2$ to obtain
\[
r(v_1)\cdot\sigma(e_1)\cdots\sigma(e_{j-1})\cdot r(v_j)\cdot r(v_j)\cdot\sigma(e_j)\cdots\sigma(e_m)\cdot r(v_{m+1}) < 0.
\]
But this would imply that either $e_1,\ldots,e_{j-1}$ or $e_j,\ldots,e_m$ is an inconsistent path, violating the minimality of $e_1,\ldots,e_m$.  So $r(v_j)=0$ and therefore $b(v_j)=-2$.  The fact that $\Gamma$ is good then tells us that $\deg(v_j)\leq -b(v_j)=2$, and the existence of the edges $e_{j-1}$ and $e_j$ implies that $\deg(v_j)=2$.

At last, let us define $\Gamma'$ to be the maximal subgraph of $\Gamma$ obtained by deleting the vertices $v_1,\ldots,v_{m+1}$.  Because $e_1,\ldots,e_m$ is a linear chain, \Cref{lemma:wrapped-up} and its proof produce a Stein diagram $\mathcal{D}$ for $\Gamma$ from which we may obtain a Stein diagram for $\Gamma'$ by deleting the unknots associated to $v_1,\ldots,v_{m+1}$ as well as the 1-handle over which this link passes.  As a result, $(Y,\zeta)$ is of the form hypothesized by \Cref{thm:fillings-of-round-surgery}.  Moreover, \Cref{thm:fillings-of-round-surgery} produces contact manifolds $(M_k,\xi_k)$, $1\leq k\leq m+1$, with a Stein diagram for a filling of $(M_k,\xi_k)$ obtained by deleting from $\mathcal{D}$ the unknot associated with $v_k$, as well as the 1-handle over which this unknot passes.  Note that this is precisely the diagram\footnote{If $\Gamma-\{v_k\}$ is disconnected, then its Stein diagram corresponds to a boundary connected sum.} associated to $\Gamma_k:=\Gamma-\{v_k\}$.  So every exact/weak symplectic filling of $(Y,\zeta)$ results from attaching a symplectic round 1-handle to an exact/weak filling of $(Y_j,\zeta_j)$, for some $1\leq j\leq m+1$, and repeated application of this process reduces us to graphs which are consistent.  This proves \Cref{thm:plumbed-3-manifolds}.

\bibliographystyle{alpha}
\bibliography{references}
\vspace{10pt}
\end{document}